\let\None\emptyset
\DeclareMathOperator{\DCP}{DCP}
\DeclareMathOperator{\CCLW}{CCLW}
\DeclareMathOperator{\CLQP}{CLQP}
\DeclareMathOperator{\rad}{rad}
\newcommand{\var}[1]{\mathcal{#1}}
\newcommand{\alphabet}[1]{\mathcal{#1}}
\newcommand{\alg}[1]{\mathbf{#1}}
\newcommand{\relstr}[1]{\mathbb{#1}}
\newcommand{\tuple}[1]{\mathbf{#1}}
\newcommand{\edge}[1]{\buildrel\relstr #1\over -}
\newcommand{\redge}[1]{\buildrel\relstr #1\over \rightarrow}
\newcommand{\ledge}[1]{\buildrel\relstr #1\over \leftarrow}
\newcommand{\edgev}{\mathrel{-}}
\newcommand{\redgev}{\mathrel{\rightarrow}}
\newcommand{\ledgev}{\mathrel{\leftarrow}}
\newcommand{\defeq}{\buildrel \rm def\over =}
\newcommand{\Q}{\mathbb Q}
\newcommand{\Z}{\mathbb Z}
\newcommand{\DC}{\mathbb D}
\theoremstyle{plain}
\newtheorem{theorem}{Theorem}[section]
\newtheorem{proposition}{Proposition}[section]
\newtheorem{example}{Example}[section]
\newtheorem{corollary}{Corollary}[section]
\newtheorem{question}{Question}[section]
\newtheorem{definition}[theorem]{Definition}
\newtheorem{claim}[theorem]{Claim}
\def\medskip
  \def\svgwidth{#}
  \def\svgwidth{#1}
\newenvironment{customlemma}[1]
  {\innercustomlemma}
  {\endinnercustomlemma}
\newtheorem{innerlemma}{Lemma}
\def\lemma[#1]#2\endlemma{%
  \begin{innerlemma}
    \label{#1}
    #2
  \end{innerlemma}
  \edef\tmp{\theinnerlemma}
  \expandafter\savelemma\expandafter{\tmp}{#1}{#2}
}
\def\savelemma#1#2#3{%
  \expandafter\def\csname lemma[#2]\endcsname{%
    \begin{customlemma}{#1}
      #3
    \end{customlemma}
  }
}
\def\remindlemma#1{\csname lemma[#1]\endcsname}
\title{Loop conditions with strongly connected graphs}
\author{Miroslav Ol\v s\'ak}
\begin{document}

\maketitle
\thanks{
  Partially supported
  by the Czech Grant Agency (GA\v CR) under grant no. 18-20123S,
  by the National Science Centre Poland under grant no. UMO-2014/13/B/ST6/01812
  and by the PRIMUS/SCI/12 project of the Charles University.
}
\begin{abstract}
  We prove that the existence of a term $s$ satisfying
  $s(r,a,r,e) = s(a,r,e,a)$ in a general algebraic structure is
  equivalent to an existence of a term $t$ satisfying
  $t(x,x,y,y,z,z)=t(y,z,z,x,x,y)$.
  As a consequence of a general version of this theorem and previous
  results we get that each strongly connected digraph of algebraic
  length one, which is compatible with an operation $t$ satisfying an
  identity of the from $t(\ldots)=t(\ldots)$, has a loop.
\end{abstract}

\section{Introduction}

Under which structural and algebraic conditions does a graph
compatible with an algebra necessarily have a loop? This question has
originated from the algebraic approach to the fixed--template
constraint satisfaction problems and answers have provided strong
results and useful tools in this area as well as in universal algebra,
see~\cite{DagstuhlPoly,DagstuhlAbs} for recent surveys.

A. Bulatov's algebraic refinement of the well--known Hell and Nesetril
dichotomy for undirected graphs~\cite{HellNesetril} can be stated as
follows.

\begin{theorem} [basic loop lemma]
  \cite{BulatovLoop}
  If a finite undirected graph $\relstr G$
  \begin{itemize}
  \item contains a triangle and
  \item is compatible with a Taylor term
    (that is, an operation satisfying a non-trivial set of identities
    of the form $t(\hbox{some vars})=t(\hbox{some vars})$),
  \end{itemize}
  then $\relstr G$ contains a loop (that is an edge joining a vertex with itself).
\end{theorem}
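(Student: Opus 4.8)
\medskip
\noindent\textbf{Proof proposal.}\quad
Read $\relstr G=(G;E)$ as a symmetric binary relation on its vertex set: ``compatible with a Taylor term'' says precisely that the algebra $\mathbf A=(G;\operatorname{Pol}\relstr G)$ is Taylor and that $E$ is a subuniverse of $\mathbf A^2$, and a loop is a member of the subuniverse $\{g:(g,g)\in E\}$, which we must show is nonempty. My plan has two steps. \emph{Step 1:} upgrade the Taylor polymorphism of $\relstr G$ to an operation $t$ satisfying the Siggers identity $t(x,x,y,y,z,z)=t(y,z,z,x,x,y)$ --- equivalently, by the main theorem of this paper, a $4$-ary operation $s$ with $s(r,a,r,e)=s(a,r,e,a)$. \emph{Step 2:} read the loop off the triangle. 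Step 2 is one line: if $\{a,b,c\}$ is the triangle then each of the six pairs $(a,b),(a,c),(b,c),(b,a),(c,a),(c,b)$ lies in $E$, so since $E$ is a subuniverse of $\mathbf A^2$ the pair $\bigl(t(a,a,b,b,c,c),\,t(b,c,c,a,a,b)\bigr)$ lies in $E$, and its two coordinates are equal by the identity defining $t$ (take $x:=a$, $y:=b$, $z:=c$); thus $E$ has a loop. Note that Step 2 uses neither finiteness nor idempotency, so all the substance is in Step 1.

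Step 1 is Siggers' theorem --- ``every finite Taylor clone contains a Siggers operation'' --- so one route is simply to invoke it. For a proof of the loop lemma that does not quote Siggers' theorem I would argue directly by induction on $|G|$. After routine preliminary reductions --- to a minimal counterexample and (technical but not deep) to the idempotent case --- assume $\mathbf A$ finite idempotent Taylor, $\relstr G$ connected, hence $E$ subdirect in $G^2$, and, since the triangle now provides an odd cycle in a connected graph, $E$ \emph{linked}: the equivalence on $G$ generated by ``$u,v$ have a common neighbour'' is the full relation. For the induction step: if $\mathbf A$ has a nonempty proper absorbing subalgebra $B$, then $E\cap B^2$ is an absorbing, hence nonempty, subalgebra of $E$; using that projections of absorbing subalgebras are absorbing and that the link-classes of a restriction of $E$ are again subuniverses, shrink once more to restore subdirectness and linkedness, obtaining a strictly smaller instance of the hypotheses, and conclude by the inductive hypothesis. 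If instead $\mathbf A$ is absorption-free, invoke the structure theorem that a linked subdirect relation between finite absorption-free idempotent Taylor algebras is the full product; applied to $E\le\mathbf A^2$ it yields $E=G\times G$, which certainly contains a loop.

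The main obstacle --- in Step 1, and equivalently in the absorption-free case above --- is the structure theory that powers it: the cyclic-term theorem of Barto and Kozik (a finite idempotent Taylor algebra has, for each prime $p>|G|$, a term $c$ with $c(x_1,\dots,x_p)=c(x_2,\dots,x_p,x_1)$), the ``full product'' theorem for linked subdirect relations, and the general theory of absorbing subalgebras --- essentially all the depth of the loop lemma lives there. The fussiest of the merely technical points are the bookkeeping in the absorbing case (transferring ``subdirect and linked'' to the restricted relation) and the reduction to the idempotent case.
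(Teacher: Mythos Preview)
The paper does not give its own proof of this theorem: it is stated with a citation to Bulatov and used as a black box. In particular, the very next result in the paper (Theorem~\ref{siggers-thm}, Siggers' theorem) is \emph{derived from} the basic loop lemma, not the other way around. So your ``Step~1: upgrade Taylor to Siggers and then apply to the triangle'' is, in the logical architecture of this paper, circular: you would be invoking a consequence of the theorem you are trying to prove. Your appeal to ``the main theorem of this paper'' does not help, since that theorem (Theorem~\ref{main-thm}) is stated for loop-producing varieties, a notion whose equivalence with the Siggers condition (Proposition~\ref{loop-prod-char}) again rests on the undirected loop lemma via the results of~\cite{LoopConditions}.

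Your alternative sketch via absorption and the Barto--Kozik structure theory is a genuinely different route from Bulatov's original argument and, properly executed, does yield the loop lemma independently. But note two things. First, you have effectively replaced one cited theorem by several others of comparable depth (the cyclic-term theorem, the absorption theorem for linked subdirect products), so this is a reorganisation of the dependencies rather than a self-contained proof. Second, your outline of the absorbing case is too loose: from an absorbing $B\leq\mathbf A$ one does not automatically get that $E\cap B^2$ is nonempty, subdirect, and contains (or can be reduced to contain) an odd cycle; the actual argument passes through an absorbing subuniverse of $E$ and uses linkedness more carefully to recover the hypotheses on the smaller instance. If you want a direct proof that stays within the paper's toolkit, the closest thing here is the argument for Theorem~4.1 in Section~\ref{example-classes}, which shows by an explicit clique-growing construction that a locally finite Taylor variety is loop-producing; combined with your Step~2 that gives the basic loop lemma without quoting Siggers.
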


In 2010, M. Siggers observed that this structural result has an
algebraic equivalent statement.
\begin{theorem}
  \label{siggers-thm}
  \cite{Siggers}
  If $\alg A$ is a finite algebra with a Taylor term, then $\alg A$
  has a 6-ary Siggers
  term $s$ satisfying $s(x,x,y,y,z,z) = s(y,z,z,x,x,y)$.
\end{theorem}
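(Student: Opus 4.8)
The plan is to reduce the substantive direction --- that a Taylor term forces a $6$-ary Siggers term --- to the basic loop lemma quoted above; the reverse implication, which together with it gives the equivalence Siggers observed, is trivial, since a term satisfying $s(x,x,y,y,z,z)=s(y,z,z,x,x,y)$ has in each of its six coordinates distinct variables on the two sides and so is satisfied by no projection. So assume $\alg A$ is finite with a Taylor term, and also $|A|\ge 2$, the one-element case being trivial. First I would pass to the free algebra $\alg F$ on three generators $x,y,z$ in the variety $\var V(\alg A)$ generated by $\alg A$; since $\alg A$ is finite, $\alg F$ is finite (it embeds in a finite power of $\alg A$), its three generators are pairwise distinct, and, lying in $\var V(\alg A)$, it carries a Taylor term of its own, namely the given one interpreted in $\alg F$.

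Next I would build a graph. Inside $\alg F^{2}$ let $D$ be the subalgebra generated by the six pairs
\[
  (x,y),\quad (x,z),\quad (y,z),\quad (y,x),\quad (z,x),\quad (z,y),
\]
viewed as a binary relation, i.e. a digraph, on the vertex set $F$. The three facts I need are: (i) applying any $6$-ary term coordinatewise to these six generators produces exactly the pair $\bigl(t^{\alg F}(x,x,y,y,z,z),\,t^{\alg F}(y,z,z,x,x,y)\bigr)$, so $D$ is precisely the set of all such pairs as $t$ ranges over $6$-ary terms; (ii) the coordinate swap is an automorphism of $\alg F^{2}$ permuting the generating set, hence fixing $D$, so $D$ is symmetric and $(F;D)$ is an undirected graph; and (iii) $(F;D)$ contains a triangle --- on the three distinct vertices $x,y,z$, whose pairwise edges sit among the generators --- and, being a subalgebra of $\alg F^{2}$, is compatible with the Taylor term of $\alg F$.

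Then the basic loop lemma, applied to the finite undirected graph $(F;D)$, which has a triangle and is compatible with a Taylor term, yields a loop $(w,w)\in D$. By (i) there is a $6$-ary term $s$ with $s^{\alg F}(x,x,y,y,z,z)=w=s^{\alg F}(y,z,z,x,x,y)$; since $\alg F$ is freely generated by $x,y,z$, this equality between the two corresponding ternary term operations already holds at the generators, hence is an identity of $\var V(\alg A)$ and in particular of $\alg A$. Thus $s$ is the required $6$-ary Siggers term.

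The hard part is entirely contained in the basic loop lemma itself, which I am allowed to assume; everything around it is bookkeeping. The points that demand a little care are: the finiteness of $\alg F$ and the distinctness of its generators, so that the triangle in $(F;D)$ is genuine (this is exactly where $|A|\ge 2$ is used); the observation that ``$D\le\alg F^{2}$'' is verbatim the loop lemma's hypothesis of compatibility with a Taylor term; and the passage from a loop back to an honest identity, which relies on freeness of $\alg F$. I anticipate no essential difficulty beyond assembling these pieces correctly.
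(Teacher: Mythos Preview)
Your proposal is correct and follows essentially the same argument as the paper: form the free algebra on three generators in $\var V(\alg A)$, take the subalgebra of $\alg F^{2}$ generated by the six ordered pairs among $x,y,z$, observe this is a finite undirected graph with a triangle compatible with the Taylor term, apply the basic loop lemma to obtain a loop, and read off the Siggers identity via freeness. You supply somewhat more detail than the paper (the symmetry via the coordinate-swap automorphism, the distinctness of generators when $|A|\ge 2$, the explicit description of $D$ as the set of pairs $\bigl(t(x,x,y,y,z,z),t(y,z,z,x,x,y)\bigr)$), but the strategy is identical.
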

\begin{proof}
  Let $\alg F$ be the $\alg A$-free algebra generated by the set
  $\{x,y,z\}$. Since $\alg A$ is finite, $\alg F$ is finite as well.
  We consider a symmetric binary relation (graph) on $\alg F$
  generated by the pairs
  $$
  \begin{pmatrix} x\cr y \end{pmatrix},
  \begin{pmatrix} x\cr z \end{pmatrix},
  \begin{pmatrix} y\cr z \end{pmatrix},
  \begin{pmatrix} y\cr x \end{pmatrix},
  \begin{pmatrix} z\cr x \end{pmatrix},
  \begin{pmatrix} z\cr y \end{pmatrix}.
  $$
  This graph has a triangle on $x,y,z$, so it has a loop $(l,l)$ by
  the loop lemma. Let $s$ be the six-ary term that generates the loop
  from the generators above. Then the following identity holds
  for the generators $x,y,z$.
  $$
  s(x,x,y,y,z,z) = l = s(y,z,z,x,x,y)
  $$
  However, since $\alg F$ is a free algebra, the identity holds in
  general.
\end{proof}

Notice that obtaining the structural result from the algebraic one is
even easier -- if we have a Siggers term $s$, and an undirected
triangle in a compatible graph, we find the loop directly by applying
the Siggers term to the six edges of the triangle (we use both
directions of each of three undirected edge).

The loop lemma was improved by L. Barto and M. Kozik.
\begin{theorem} [loop lemma]
  \label{dir-loop-lemma}
  \cite{BartoKozikLoop}
  If a finite digraph $\relstr G$
  \begin{itemize}
  \item is weakly connected,
  \item is smooth (has no sources and no sinks),
  \item has algebraic length 1 (cannot be mapped to a non-trivial
    directed cycle) and
  \item is compatible with a Taylor term,
  \end{itemize}
  then $\relstr G$ contains a loop.
\end{theorem}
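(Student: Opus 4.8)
\emph{Proof proposal.}
I would prove the loop lemma by induction on the number of vertices of $\relstr G$, with the strategy ``reduce to a finite idempotent algebra, then peel off an absorbing subuniverse or a congruence''. (Symmetrising $\relstr G$ and quoting the basic loop lemma is not enough: the symmetrisation of a smooth, weakly connected digraph of algebraic length $1$ may be triangle-free, for instance $\Z_n$ with the edges $i\to i+2$ and $i\to i+3$ for $n$ large.) First I would reduce to a core: replacing $\relstr G$ by its core $\relstr G'$ preserves smoothness, weak connectivity and algebraic length $1$ (a retraction is, restricted to $\relstr G'$, a homomorphism both ways) and preserves the presence or absence of a loop; and since every unary polymorphism of a core is an automorphism, if $t$ is a Taylor polymorphism then $\sigma:=t(x,\dots,x)$ is an automorphism and $\sigma^{-1}\circ t$ is an \emph{idempotent} Taylor polymorphism. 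So we may assume that $\alg A$, the polymorphism algebra of $\relstr G$, is finite idempotent; by Theorem~\ref{siggers-thm} it has a Siggers term, and --- following Barto and Kozik --- a cyclic term $c$ of some prime arity $p>|A|$. The edge relation $E\le\alg A^2$ is subdirect, by smoothness. If $|A|=1$ the unique vertex is smooth, hence a loop; so assume $|A|>1$.

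For the inductive step, if $\alg A$ has a proper absorbing subuniverse $B$, I would restrict: the aim is to exhibit inside $B$ a nonempty, smooth, weakly connected digraph of algebraic length $1$ obtained from $\relstr G$ by a pp-construction, and then invoke the induction hypothesis (fewer vertices, still compatible with a Taylor term, namely that of $\alg A|_B$) to read off a loop of $\relstr G$. If instead $\alg A$ has a proper congruence $\alpha$, I would pass to $\relstr G/\alpha$, which is again smooth, weakly connected and of algebraic length $1$ (a homomorphism $\relstr G/\alpha\to C_k$ composes with the projection to give $\relstr G\to C_k$) and has fewer vertices, so by induction it has a loop; the witnessing $\alpha$-class $C$ --- a subuniverse of $\alg A$, since $\alg A$ is idempotent --- then carries an edge of $\relstr G$, and one descends into $C$ by an argument analogous to, though more delicate than, the absorbing case.

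The remaining case is that $\alg A$ is simple with no proper absorbing subuniverse. Then the two linking congruences of $E$ (generated, respectively, by $E\circ E^{-1}$ and by $E^{-1}\circ E$) are congruences of $\alg A$, hence trivial or full; if either were trivial, $\relstr G$ would be a disjoint union of directed cycles, hence --- being weakly connected of algebraic length $1$ --- a single looped vertex, contradicting $|A|>1$; so $E$ is linked. But a simple finite idempotent Taylor algebra without a proper absorbing subuniverse is affine (in the non-abelian types the local structure would yield such a subuniverse), and a subdirect compatible relation in a finite affine algebra is a coset of a subdirect submodule of $\alg A^2$, which by Goursat's lemma is $A^2$ or the graph of an automorphism; the latter is not linked, so $E=A^2$, which certainly contains a loop.

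The step I expect to be the main obstacle is the descent in the first two cases: producing, inside the absorbing subuniverse $B$ (or the congruence block $C$), a nonempty smooth weakly connected digraph of algebraic length $1$ built from $\relstr G$. Smoothness and weak connectivity come cheaply --- linkedness of $E$ together with the absorbing structure (respectively the congruence) keeps the relevant vertices supplied with in- and out-neighbours inside $B$ (respectively $C$) --- but algebraic length is \emph{not} monotone under passing to subgraphs, since a directed $k$-cycle embeds into many digraphs of algebraic length $1$; this is exactly where the cyclic term $c$ of prime arity $p>|A|$ is used essentially. Concretely, one takes a closed walk in $\relstr G$ of signed length $1$ (it exists precisely because $\relstr G$ has algebraic length $1$), forms $p$ cyclic rotations of it, applies $c$ coordinatewise to these $p$ walks, and uses the absorbing property of $B$ (respectively a block decomposition along $\alpha$) to force the resulting closed walk, still of signed length $1$, to lie in $B$ (respectively $C$). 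Carrying this out, together with the propagation-of-absorption lemma behind the ``cheap'' part of the absorbing case, is the technical heart of the Barto--Kozik proof; the affine case, by contrast, is the brief linear-algebra computation given above.
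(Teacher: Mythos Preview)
The paper does not contain a proof of this theorem: it is stated with the citation \cite{BartoKozikLoop} and used as a black box (the paper's own contributions begin with the loop-condition machinery in Section~\ref{main-proof}), so there is nothing here to compare your proposal against.

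For what it is worth, your outline is a reasonable summary of the Barto--Kozik strategy: reduce to an idempotent core, obtain a cyclic term of large prime arity, and run an induction on $|A|$ through the trichotomy ``proper absorbing subuniverse / proper congruence / absorption-free simple''. You also correctly identify the descent step --- manufacturing a smooth, weakly connected, algebraic-length-$1$ digraph inside the absorbing subuniverse or the congruence block, using the cyclic term applied to rotations of a closed walk --- as the technical heart, and that is indeed where the bulk of the work in \cite{BartoKozikLoop} lies. Two small caveats on the endgame: the passage from ``simple idempotent Taylor with no proper absorbing subuniverse'' to ``affine'' is itself a substantial theorem (the Absorption Theorem: such an algebra is abelian, hence --- being simple and Taylor --- affine), not a quick observation about local structure; and in the affine case $E$ is a \emph{coset} of a submodule rather than a submodule, so one translation is needed before the Goursat argument applies. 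Neither of these is a gap in the strategy, only in the level of detail.
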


Consequently, one can improve Theorem~\ref{dir-loop-lemma} as follows.
\begin{theorem}
  \cite{OptimalStrong}
  If $\alg A$ is a finite algebra with a Taylor term, then $\alg A$
  has a 4-ary Siggers term $s$ satisfying
  $s(r,a,r,e) = s(a,r,e,a)$.
\end{theorem}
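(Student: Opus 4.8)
The plan is to imitate the proof of Siggers' theorem (Theorem~\ref{siggers-thm}), but to feed a suitably constructed digraph into the directed loop lemma (Theorem~\ref{dir-loop-lemma}) in place of the basic loop lemma. First I would make the standard reduction to the idempotent case: replace $\alg A$ by its full idempotent reduct $\alg A^{*}$, the algebra on the same universe whose basic operations are exactly the idempotent term operations of $\alg A$. It is again finite, and it still has a Taylor term, since a Taylor term of $\alg A$ is idempotent and hence one of the basic operations of $\alg A^{*}$; and a $4$-ary term $s$ of $\alg A^{*}$ satisfying $s(r,a,r,e)=s(a,r,e,a)$ is a term operation of $\alg A$ satisfying the same identity. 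Thus it suffices to prove the theorem under the assumption that $\alg A$ is idempotent, which I assume from now on.

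Let $\alg F$ be the $\alg A$-free algebra generated by $\{r,a,e\}$. As in the proof of Theorem~\ref{siggers-thm} it is finite (a subalgebra of a finite power of $\alg A$), and it is idempotent. I consider the digraph $\relstr G$ with vertex set $\alg F$ whose edge relation is the subalgebra
\[
  E=\mathrm{Sg}_{\alg F^{2}}\{(r,a),\,(a,r),\,(r,e),\,(e,a)\}
\]
of $\alg F^{2}$. Three of the four hypotheses of Theorem~\ref{dir-loop-lemma} are then routine. Since $E$ is a subalgebra of $\alg F^{2}$ it is invariant under all operations of $\alg F$ applied coordinatewise, so $\relstr G$ is compatible with every operation of $\alg F$, in particular with a Taylor term. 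The digraph $\relstr G$ is smooth, because the projection of $E$ onto the first coordinate is the subalgebra of $\alg F$ generated by $\{r,a,r,e\}$ and the projection onto the second coordinate the one generated by $\{a,r,e,a\}$; both equal $\alg F$, so every vertex is both a tail and a head of some edge. And $\relstr G$ has algebraic length $1$: it contains the directed closed walk $r\to a\to r$ of length $2$ (using the edges $(r,a)$ and $(a,r)$) and the directed closed walk $r\to e\to a\to r$ of length $3$ (using $(r,e)$, $(e,a)$ and $(a,r)$), so it admits no homomorphism onto a directed cycle of length greater than $1$.

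The remaining hypothesis, weak connectivity, is the heart of the proof, and I would in fact prove the stronger statement that every vertex has a directed path to $r$. Write an arbitrary vertex as $f=t(r,a,e)$ for a term $t$ and argue by induction on the complexity of $t$. If $t$ is a variable then $f\in\{r,a,e\}$, and $r$, $a\to r$, $e\to a\to r$ are the required paths. If $t=\omega(t_{1},\dots,t_{k})$, then by the inductive hypothesis each $f_{i}=t_{i}(r,a,e)$ has a directed path $P_{i}$ to $r$; concatenating copies of the two directed closed walks at $r$ found above produces directed closed walks at $r$ of every length at least $2$, so the $P_{i}$ can be lengthened to a common length. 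Applying $\omega$ coordinatewise to the resulting equal-length directed walks yields a directed walk from $\omega(f_{1},\dots,f_{k})=f$ to $\omega(r,\dots,r)=r$, the last equality being precisely where idempotency is used. I expect this inductive step to be the main obstacle: it is what dictates the choice of the four generating pairs, and it is where the reduction to the idempotent case is used.

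Now all four hypotheses are in place, so Theorem~\ref{dir-loop-lemma} provides a loop $(l,l)\in E$. Choose a $4$-ary term $s$ for which $(l,l)$ is the value of $s$ at the four generating pairs, computed coordinatewise in $\alg F^{2}$; comparing coordinates gives $s(r,a,r,e)=l=s(a,r,e,a)$ in $\alg F$. Since $\alg F$ is relatively free on $r,a,e$, the identity $s(r,a,r,e)=s(a,r,e,a)$ holds throughout $\mathrm{Var}(\alg A)$, hence in $\alg A$, so $s$ is the desired $4$-ary Siggers term (and it is idempotent, being a composition of idempotent operations).
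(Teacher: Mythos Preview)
Your proof is correct and follows exactly the approach the paper indicates: the theorem is stated in the paper only with a citation to \cite{OptimalStrong}, preceded by the word ``Consequently'' to signal that it is obtained from the directed loop lemma (Theorem~\ref{dir-loop-lemma}) in the same way Theorem~\ref{siggers-thm} is obtained from the basic loop lemma. Your reduction to the idempotent case, the verification of smoothness and algebraic length~$1$, the inductive proof that every element of $\alg F$ has a directed walk to $r$ (using idempotency and the fact that the $2$- and $3$-cycles through $r$ yield closed walks at $r$ of every length $\geq 2$), and the extraction of the term from the loop are all sound and constitute precisely the argument one expects.
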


Since both algebraic results give a Taylor term,
the following properties are equivalent for a finite algebra $\alg A$:
\begin{enumerate}[(i)]
\item $\alg A$ has a Taylor term.
\item $\alg A$ has a 6-ary Siggers term $s(x,x,y,y,z,z) = s(y,z,z,x,x,y)$.
\item $\alg A$ has a 4-ary Siggers term $s(r,a,r,e) = s(a,r,e,a)$.
\end{enumerate}

A natural question is whether the equivalence can be generalized to
general (infinite) algebras. A. Kazda found an example~\cite{Kazda} of
an algebra that has an idempotent Taylor term (that is, a Taylor term
t which additionally satisfies t(x,....x)=x), but no 6-ary or 4-ary
Siggers term. Therefore the remaining question is whether the
properties (ii) and (iii) are equivalent.

Recently, the author studied such conditions in
general~\cite{LoopConditions}. A \emph{loop condition} is
a condition of the form: There is a term $t$ satisfying
$t(\text{some variables}) = t(\text{some variables})$.
A consequence of results in~\cite{LoopConditions} is that the
following statements are equivalent for every algebra $\alg A$.
\begin{enumerate}[(i)]
\item $\alg A$ satisfies a non-trivial loop condition,
\item $\alg A$ has a 6-ary Siggers term $s(x,x,y,y,z,z) = s(y,z,z,x,x,y)$.
\item Every non-bipartite undirected graph compatible with an algebra
  $\alg B$ in the variety generated by $\alg A$ has a loop.
\end{enumerate}

In this paper, we give a positive answer to the raised question by
finding a broader class of loop conditions that are equivalent. This
allows us to give an even stronger structural characterization of
algebras satisfying some loop condition in the following form.
\begin{theorem}
  Let $\var V$ be a variety. Then the following are equivalent.
  \begin{enumerate}[(i)]
  \item $\var V$ satisfies a non-trivial loop condition,
  \item Let $\relstr G$ be a graph compatible with $\alg A\in\var V$.
    If $\relstr G$ has a strongly connected component with
    algebraic length one, then $\relstr G$ has a loop.
  \end{enumerate}
\end{theorem}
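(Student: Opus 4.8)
The plan is to reduce the statement to the characterisation of \cite{LoopConditions} quoted above --- that (i) is equivalent to the existence of a $6$-ary term with $t(x,x,y,y,z,z)=t(y,z,z,x,x,y)$ and also to every non-bipartite undirected graph compatible with an algebra of $\var V$ having a loop --- together with the following \emph{key lemma}, the ``general version'' of the theorem announced in the abstract: if $\var V$ satisfies the $6$-ary Siggers identity, then for every finite, loopless, strongly connected digraph $\relstr H$ of algebraic length one, $\var V$ satisfies the \emph{loop condition of $\relstr H$}, that is, $\var V$ has a term of arity $|E(\relstr H)|$ identifying the tuple of tails of the edges of $\relstr H$ with the tuple of their heads. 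I also use the routine observation $(\star)$: if $\var V$ satisfies the loop condition of $\relstr H$ and $\relstr G$ is a digraph compatible with some $\alg A\in\var V$ that admits a homomorphism $h:\relstr H\to\relstr G$, then $\relstr G$ has a loop --- applying the witnessing term coordinatewise to the tuple $(h(e))_{e\in E(\relstr H)}$ of edges of $\relstr G$ gives an element of $\relstr G$ (by compatibility) whose two coordinates agree (because the identity defining the loop condition holds in $\var V$, in particular under the substitution sending each vertex $v$ of $\relstr H$ to $h(v)$).

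For (ii)$\Rightarrow$(i) I would apply (ii) to a concrete graph. Let $\alg F$ be the free algebra of $\var V$ on three generators $x,y,z$ and let $\relstr G$ be the symmetric digraph on $\alg F$ generated by the six oriented edges of a triangle on $x,y,z$. Then $x,y,z$ lie in one strongly connected component of $\relstr G$, and that component contains the closed walk using $(x,y)$ and $(y,z)$ forward and $(x,z)$ backward, which has algebraic length one; so by (ii) the graph $\relstr G$ has a loop $(l,l)$. Reading off the $6$-ary term $t$ with $(l,l)=t((x,y),(x,z),(y,z),(y,x),(z,x),(z,y))$ yields $t(x,x,y,y,z,z)=l=t(y,z,z,x,x,y)$ for the generators, hence --- $\alg F$ being free --- the Siggers identity throughout $\var V$, which is a non-trivial loop condition. (One could equally go through the third item of \cite{LoopConditions}, since an odd cycle in a non-bipartite undirected graph is a strongly connected subgraph of algebraic length one.)

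For (i)$\Rightarrow$(ii), by (i) and \cite{LoopConditions} the variety $\var V$ satisfies the $6$-ary Siggers identity. Let $\relstr G$ be compatible with $\alg A\in\var V$ and suppose it has a strongly connected component $\relstr G_0$ of algebraic length one. If $\relstr G$ already has a loop we are done, so assume not; then $\relstr G_0$ is loopless. I would then pass to a finite sub-digraph: fix a closed walk $W$ in $\relstr G_0$ of algebraic length one, let $S$ be its (finite) vertex set, and for each ordered pair from $S$ add a directed path joining them inside $\relstr G_0$ (possible since $\relstr G_0$ is strongly connected). The union $\relstr H$ of $W$ with these paths is finite, loopless, strongly connected, and of algebraic length one (it contains $W$), and the inclusion is a homomorphism $\relstr H\to\relstr G$. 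By the key lemma $\var V$ satisfies the loop condition of $\relstr H$, so by $(\star)$ the graph $\relstr G$ has a loop --- contradicting looplessness, and finishing the argument.

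The key lemma is where the actual work lies, and I expect it to be the main obstacle. Since $\relstr H$ is strongly connected of algebraic length one it is primitive, so there is a single walk ``shape'' --- say $p$ forward steps followed by $q$ backward steps with $p=q+1$, for a suitably large $p$ --- realising a walk between every ordered pair of vertices of $\relstr H$, in particular a closed walk of that shape at every vertex. One then works with the free-algebra form of the loop condition of $\relstr H$ (the digraph generated by the edges of $\relstr H$ on the free algebra of $\var V$ over the vertex set of $\relstr H$ must have a loop): a primitive-positive power of that free graph, namely the relation ``joined by a walk of the chosen shape'', is again compatible with the algebra, and on the generators it relates every pair --- including every vertex with itself, and in particular it contains a triangle on three generators --- so its symmetric part is a non-bipartite undirected graph compatible with the algebra and therefore has a loop by \cite{LoopConditions}; unpacking that loop produces, in the free graph itself, two directed walks of lengths $p$ and $p-1$ with the same pair of endpoints. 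The remaining step is to descend from such a configuration to a genuine loop by means of the Siggers operation, and this is precisely the substantive content of the theorem in the abstract, that the $6$-ary Siggers identity implies the four-variable identity $s(r,a,r,e)=s(a,r,e,a)$. I expect this descent to be the crux: the classical loop lemmas of Bulatov and of Barto--Kozik do not apply (the path configuration is not smooth, and in any case it is only embedded in, not compatible with, the algebra), and neither does a reduction between loop conditions along a single digraph homomorphism (there is no homomorphism from the bidirected triangle into the digraph of $s(r,a,r,e)=s(a,r,e,a)$), so the term has to be constructed by hand.
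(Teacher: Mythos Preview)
Your reduction of the theorem to the key lemma (Theorem~\ref{main-thm}) is correct and matches the paper's structure: (ii)$\Rightarrow$(i) via the free triangle is the standard argument, and for (i)$\Rightarrow$(ii) extracting a finite strongly connected subdigraph of algebraic length one and then invoking the key lemma together with Proposition~\ref{lc-basic-char} is exactly how the paper proceeds (see the first corollary after Theorem~\ref{main-thm}).

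The sketch of the key lemma itself, however, does not work. The pp-defined relation ``joined by a walk of $p$ forward and $p-1$ backward steps'' already has a loop at every generator of the free graph --- you note this yourself --- so applying the loop-producing property to it is vacuous: the loop you recover is one of these generator loops, and unpacking it merely reproduces a two-paths configuration that was already visible inside $\relstr H$. No progress has been made; you have reduced the key lemma to itself. Worse, the ``descent'' you isolate as the crux --- producing a loop from a configuration of two directed paths of lengths $p$ and $p-1$ with common endpoints, using only the Siggers operation --- is \emph{false} as an implication between loop conditions: that two-paths digraph is acyclic, hence has no strongly connected component of algebraic length one, and the first example in Section~\ref{no-strong-conn} exhibits an idempotent Siggers algebra compatible with a loopless digraph into which every countable digraph of that kind embeds. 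So one cannot reach the $s(r,a,r,e)=s(a,r,e,a)$ condition from the $6$-ary Siggers identity by passing through the two-paths loop condition, and your identification of the descent with ``precisely the substantive content of the theorem in the abstract'' is a misreading. The paper's proof of Theorem~\ref{main-thm} takes a completely different route: a chain of seven lemmas through the auxiliary digraph families $\CLQP$, $\CCLW$, $\DCP$, where each reduction is arranged so that the pp-constructed auxiliary graph contains an odd undirected cycle but \emph{no} evident loop, allowing Proposition~\ref{loop-prod-char} to contribute genuinely new information at every step.
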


\subsection{Outline}

We start by providing a proper definition of the key concepts in
Section~\ref{preliminaries} including a digraph associated to a loop
condition. We also summarize there the results of the paper~\cite{LoopConditions}.

In Section~\ref{main-proof} we prove our main result: that all loop
conditions with a strongly connected digraph with algebraic length
one are equivalent.

Then we give two examples of classical classes that satisfy such a
condition in Section~\ref{example-classes}.

In Section~\ref{cyclic-terms}, we classify the strength of loop
comditions with strongly connected digraphs (of arbitrary algebraic
lengths)

Finally, in Section~\ref{no-strong-conn} we discuss the case
whithout strong connectedness. This case seems to be much colorful, so
we provide at least some counterexamples and some partial results.

\section{Preliminaries}
\label{preliminaries}

We refer to~\cite{Bergman,Bergman2} for undefined notions and more background.

\subsection{Digraphs, algebraic length}

A \emph{digraph} $\relstr G = (A,G)$ is a relational structure, where
$A$ is the set of nodes (vertices) and $G\subset A\times A$ is a binary relation,
in other words, the set of edges. We sometimes denote an edge
$(x,y)\in G$ of a digraph by
$$
x\redge G y,\text{ or }y\ledge G x.$$
If both
$(x,y),(y,x)\in G$, we write $x\edge G y$. If the digraph is clear from
the context, we may write just $x\redgev y, y\ledgev x$, or
$x\edgev y$. A loop in $\relstr G$ is an edge
$a\redgev a$.

If the relation $G$ is symmetric, we also call $\relstr G$ as
undirected graph.

Consider two digraphs $\relstr G=(A,G)$ and $\relstr H=(B,H)$. A digraph
homomorphism $f\colon \relstr G\to\relstr H$ is a mapping
$A\to B$ such that $f(a_1)\redge H f(a_2)$ whenever
$a_1\redge G a_2$.

Thorough the paper, we use the following basic digraphs:
\begin{enumerate}
\item A clique $\relstr K_n = (\{0,\ldots,n-1\}, \neq)$,
\item a symmetric cycle $\relstr C_n$ on $\{0,\ldots,n-1\}$, where
  $x\edgev y$ if $y\equiv x\pm1\pmod n$,
\item a directed cycle $\DC_n$ on $\{0,\ldots,n-1\}$, where
  $x\redgev y$ if $y\equiv x+1\pmod n$,
\item a directed path $\Z$ on integers
  $\{\ldots,-2,-1,0,1,2,\ldots\}$, where $k\redgev k+1$ for all
  integers $k$.
\end{enumerate}

An \emph{oriented walk} of length $n$
is a sequence of vertices $x_0,x_1,\ldots x_n$ such that for each $i$
we have $x_i\redgev x_{i+1}$ or $x_i\ledgev x_{i+1}$. A walk is
called \emph{directed} if for all $i$ we have the forward edge
$x_i\redgev x_{i+1}$. If $x_0=x_n$, we also talk about directed or
oriented cycle. Notice that a directed cycle as a walk of length $n$
on a digraph $\relstr G$ corresponds to a digraph homomorphism
$\DC_n\to\relstr G$.

If every pair of vertices is connected by an oriented walk, we call
the digraph \emph{weakly connected}. If every pair of vertices is
connected by a directed walk (in both ways), we call the digraph
\emph{strongly connected}.

\emph{Algebraic length} of an oriented walk is defined as the number
of forward edges minus the number of backward edges in the walk. The
\emph{algebraic length of a digraph} $\relstr G$, denoted
$al(\relstr G)$, is the greatest common divisor of the algebraic
lengths of all oriented cycles, or $\infty$ if all oriented cycles
have algebraic length zero. Note that our definition slightly differs
from the one from~\cite{BartoKozikLoop} on digraphs that are not weakly
connected. On the other hand, neither of papers is interested in that
case. If $\relstr G$ is weakly connected, then there is an oriented
$\relstr G$-cycle of length $al(\relstr G)$.

There are also nice characterizations of these properties using
digraph homomorphism. The algebraic length of a digraph $\relstr G$ is
the biggest $n$ such that there is a digraph homomorphism
$\relstr G\to\DC_n$, or $\infty$ if there is a digraph homomorphism
$\relstr G\to\Z$. Conversely, a finite digraph $\relstr G$ is
strongly connected if and only if there is a digraph homomorphism
$\DC_n\to\relstr G$ for some $n$ that is surjective on edges. We
finish this subsection with an even stronger property of finite
strongly connected digraphs with a given algebraic length.

\begin{proposition}
  Let $\relstr G$ be a finite strongly connected digraph and let 
  $S$ be the set of all numbers $n$ such that there is a digraph
  homomorphism $\DC_n\to\relstr G$ that is surjective on edges.
  Then $S$ contains only the multiples of $al(\relstr G)$ but it
  contains any such a multiple that is large enough.
\end{proposition}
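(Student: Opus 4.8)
The plan is to prove the two halves separately: first that every $n\in S$ is a multiple of $d:=al(\relstr G)$, and then that every sufficiently large multiple of $d$ lies in $S$.

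For the first half, suppose $f\colon\DC_n\to\relstr G$ is a homomorphism surjective on edges. By the homomorphism characterization of algebraic length recalled just above, there is a homomorphism $g\colon\relstr G\to\DC_d$. The composite $g\circ f\colon\DC_n\to\DC_d$ is a digraph homomorphism, and such a map exists only when $d\mid n$ (tracing the unique cycle of $\DC_n$ forces the image to wind around $\DC_d$, so $n$ must be a multiple of $d$). Hence every element of $S$ is a multiple of $d$. Note this direction does not even need strong connectedness.

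For the second half, fix a homomorphism $h\colon\DC_m\to\relstr G$ surjective on edges for some $m\in S$ (one exists by the homomorphism characterization of strong connectedness stated in the excerpt, and by the first half $d\mid m$). Reading off the image of $h$, we obtain a closed directed walk $W_0$ in $\relstr G$ of length $m$ that traverses every edge of $\relstr G$ at least once. Now I want to manufacture, for every large enough multiple $kd$ of $d$, a closed directed walk of length exactly $kd$ that still uses every edge; such a walk is exactly a homomorphism $\DC_{kd}\to\relstr G$ surjective on edges. The idea is to splice together $W_0$ with extra closed directed walks of controlled length. Concretely, pick a vertex $v$ on $W_0$; since $\relstr G$ is strongly connected there is a closed directed walk through $v$, and the set $L$ of lengths of closed directed walks based at $v$ is closed under addition (concatenation) and, by the algebraic-length computation applied to $\relstr G$ viewed from $v$, generates the subgroup $d\Z$; by a numerical-semigroup (Chicken McNugget / Schur) argument, $L$ contains every sufficiently large multiple of $d$. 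Splicing a based walk of length $\ell\in L$ into $W_0$ at $v$ yields a closed directed edge-surjective walk of length $m+\ell$. Since $m$ is a fixed multiple of $d$ and $\ell$ ranges over all large multiples of $d$, the lengths $m+\ell$ cover all sufficiently large multiples of $d$, which is exactly the claim.

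The main obstacle — and the place requiring the most care — is the claim that the set $L$ of lengths of closed directed walks based at a single vertex $v$ actually generates $d\Z$ and hence (being an additively closed subset of $d\Z$, i.e.\ essentially a numerical semigroup after dividing by $d$) contains all large multiples of $d$. The subtlety is that $al(\relstr G)$ is defined via \emph{oriented} cycles (allowing backward edges), whereas here I need \emph{directed} closed walks; one must argue that in a strongly connected digraph any oriented cycle can be "completed" to a directed closed walk by going around using forward edges (possible precisely by strong connectedness), and that this preserves the relevant congruence class mod $d$, so that directed closed walks already realize every residue/gcd data that oriented cycles do. Once that translation is in hand, the rest is the standard fact that a sub-semigroup of $\Z_{\ge 0}$ with gcd $1$ is cofinite, applied after rescaling by $d$. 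I would also remark that strong connectedness is genuinely used in the second half (to close up walks and to reach $v$ from everywhere), in contrast to the first half.
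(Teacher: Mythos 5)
Your plan is sound, but it follows a genuinely different route from the paper. The paper works with $S$ directly: $S$ is closed under addition (two edge-surjective directed cycles share a node and can be concatenated), so with $d=\gcd(S)$ Schur's theorem already yields all large multiples of $d$, and the remaining work is to show $d = al(\relstr G)$. For the nontrivial inclusion $d\mid al(\relstr G)$ the paper uses a pointed construction: take $k\in S$ with its cycle $c_k$ and an oriented cycle of algebraic length $al(\relstr G)$, replace every backward edge by $k-1$ forward steps around $c_k$, and append one full copy of $c_k$; the result is an edge-surjective directed cycle of length $al(\relstr G)+nk\in S$, and $d\mid k$ then forces $d\mid al(\relstr G)$. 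You instead localize at a vertex $v$, introduce the additive semigroup $L$ of lengths of closed directed walks based at $v$, and splice into $W_0$. That is essentially the classical ``period of a strongly connected digraph'' argument, and it works, but it offloads the real content onto the separate lemma $\gcd(L)=al(\relstr G)$, which the paper's construction sidesteps entirely.

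That lemma is precisely the step you flag as the main obstacle, and your sketch of it is slightly miscalibrated. You say one should check that completing an oriented cycle to a directed closed walk ``preserves the relevant congruence class mod $d$.'' But $d\mid\gcd(L)$ is the cheap direction (every directed closed walk is in particular an oriented cycle); what you actually need is $\gcd(L)\mid a$ for every algebraic length $a$ of an oriented cycle, so the congruence has to be tracked mod $\gcd(L)$, not mod $d$. The argument does go through: replacing each backward edge $(x_{i+1},x_i)$ by a directed $x_i$-to-$x_{i+1}$ path (strong connectedness) yields a closed directed walk of some length $\ell$, and $\ell-a$ is a sum of terms $1+(\text{replacement length})$, each of which is itself the length of a closed directed walk through that backward edge and hence, after detouring through $v$, divisible by $\gcd(L)$. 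Combined with $\gcd(L)\mid\ell$ this gives $\gcd(L)\mid a$, hence $\gcd(L)\mid d$ and $\gcd(L)=d$. With that lemma made precise, your splicing argument is complete; as written, it remains at the sketch level with the wrong modulus named.
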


\begin{proof}
Let $d$ denote the greatest common divisor of elements of $S$.
The set $S$ is closed under addition: Any two directed cycles
that covers all edges can be joined at any node. Therefore, by Schur's
theorem, the set $S$ contains all the multiples of $d$ that are large
enough. Every number of $S$ expresses an algebraic length of a
directed cycle, so $al(\relstr G)\mid d$. It remains to prove that
$d\mid al(\relstr G)$.

To see that, take $k\in S$, the corresponding directed cycle $c_k$
and an oriented cycle $c_{al}$ on $\relstr G$ of algebraic length
$al(\relstr G)$. In $c_{al}$, we append one copy of $c_k$ and replace
every backward edge by $(k-1)$ forward edges taken from $c_k$. The
result is a directed cycle $c$ that is surjective on edges and its
length is equal to $al(\relstr G)+nk$ for some $n$. Hence
$$
al(\relstr G)+nk\in S.
$$
Since $d\mid k$ and $d\mid al(\relstr G)+nk$, also
$d\mid al(\relstr G)$ and we are done.
\end{proof}

\subsection{Compatible digraphs, pp-constructions}

An $n$-ary operation $f\colon A^n\to A$ is said to be
\emph{compatible} with
an $m$-ary relation $R\subset A^m$ if
$f(\tuple r_1,\tuple r_2,\ldots,\tuple r_n)\in R$ for any
$\tuple r_1, \tuple r_2,\ldots,\tuple r_n\in R$. Here (and later as well) we abuse
notation and use $f$ also for the $n$-ary operation on $A^m$ defined
coordinate-wise.

An algebra $\alg A=(A,f_1,f_2,\ldots)$ is said to be compatible
with a relational structure $\relstr A=(A,R_1,R_2,\ldots)$ if all the
operations $f_1,f_2,\ldots$ are compatible with all the relations
$R_1,R_2,\ldots$.

We will extensively use a standard method for building compatible
relations from existing ones -- primitive positive (\emph{pp}, for
short) definitions.
A relation $R$ is \emph{pp-definable} from relations
$R_1,\ldots,R_n$ if it can be defined by a first order formula using
variables, existential quantifiers, conjunctions, the equality
relations, and predicates $R_1,\ldots,R_n$.
Clauses in pp-definitions are also referred to as \emph{constraints}.
Recall that if $R_1,\ldots,R_n$ are compatible with an algebra, then
so is $R$.

A pp-definition of a $k$-ary relation $R$ from a digraph $\relstr G$
can be described by a finite digraph $\relstr H$ with $k$
distinguished vertices $v_1,\ldots,v_k$. We define $R(x_1,\ldots,x_k)$
by the existence of a digraph homomorphism
$\relstr H\to\relstr G$ that maps $v_i$ to $x_i$. The edges of
$\relstr H$ correspond to the constraints in the
pp-definition, and the images of remaining vertices of $\relstr H$
(other that $v_1,\ldots,v_k$) are existentialy quantified.

Using a more general technique, we can also construct a relation that
is compatible with a subalgebra of a power $\alg A^k$. We pp-define
two relations from some relations compatible with $\alg A$: a $k$-ary
relation $B$, and a $nk$-ary relation $R$. Then $B$ is compatible with
$\alg A$, so it forms a subuniverse of $\alg A^k$. The $nk$-ary
relation $R$ is perceived as an $n$-ary relation on elements of
$\alg A^k$, so $R$ acts as a $n$-ary relation on $B$ as well. Even in
this perception, the relation $R$ is still compatible with the
subalgebra $\alg B\leq\alg A^k$ on subuniverse $B$. This construction
is a special case of pp-construction.

In our proof, we will use this construction in the following form.
Consider two finite (template) digraphs $\relstr V = (V_v, V_e)$,
$\relstr E = (E_v, E_e)$,
and two digraph homomorphisms
$\phi_0,\phi_1\colon \relstr V\to\relstr E$.
Then we take a digraph $\relstr G = (A,G)$ compatible with an algebra
$\alg A$, and construct a digraph $\relstr H = (B, H)$ where $B$ is the
set of all the mappings $\relstr V\to\relstr G$ and edges defined as
follows: The is an edge $v_0\redge H v_1$
if there is a homomorphism $e\colon \relstr E\to\relstr G$ such that
$v_0 = e\circ \phi_0$ and $v_1 = e\circ \phi_1$.
Then $B$ is a subuniverse of $\alg A^{|V_v|}$, and $H$ a compatible
digraph with the appropriate algebra $\alg B$.

\subsection{Loop conditions}

A \emph{variety} $\var V$ is a class of algebras closed under powers,
subalgebras and homomorphic images. In every variety, for any set $S$,
we can find the \emph{free algebra $\alg F$ freely generated by $S$}
with the following property: Let $t_1.t_2$ be any $n$-ary term
operations, and $s_1,\ldots,s_n$ be distinct elements of $S$. If
$t_1(s_1\ldots,s_n) = t_2(s_1,\ldots,s_n)$ in $\alg F$, then
$t_1(x_1\ldots,x_n) = t_2(x_1,\ldots,x_n)$ for any $x_1,\ldots,x_n$ in
any algebra from $\var V$.

A \emph{loop condition} is given by a set of variables $V$ and two
$n$-tuples
$$
x_1,x_2,\ldots,x_n,y_1,y_2,\ldots,y_n\in V.
$$
An algebra
(or variety) is said to satisfy such a condition if there is a term
$t$ in the algebra (variety) satisfying the identity
$$
t(x_1,x_2,\ldots,x_n) = t(y_1,y_2,\ldots,y_n).
$$
To such a loop condition $C$, we assign a digraph
$$
\relstr G_C = (V,\{(x_1,y_1),(x_2,y_2),\ldots,(x_n,y_n)\}
$$
\def\siggerspic #1#2#3#4#5#6{
\begingroup
\setlength{\unitlength}{57pt}
\begin{picture}(0.9,0.8)%
    \put(0.52,0.67){\makebox(0,0)[b]{\smash{$#3$}}}%
    \put(0.16,0.04){\makebox(0,0)[b]{\smash{$#1$}}}%
    \put(0.85,0.04){\makebox(0,0)[b]{\smash{$#5$}}}%
    \put(0.34,0.38){\rotatebox{60}{\makebox(0,0)[b]{\smash{$#2$}}}}%
    \put(0.67,0.39){\rotatebox{-60}{\makebox(0,0)[b]{\smash{$#4$}}}}%
    \put(0.51,0.03){\makebox(0,0)[b]{\smash{$#6$}}}%
\end{picture}%
\endgroup}

\def\siggerseq#1{%
\vbox{%
  \def\V{{\downarrow}}%
  \halign{\hfill$##{}$&&\hfil${}##{}$\hfil\cr
    #1%
}}}

\begin{figure}[!ht]
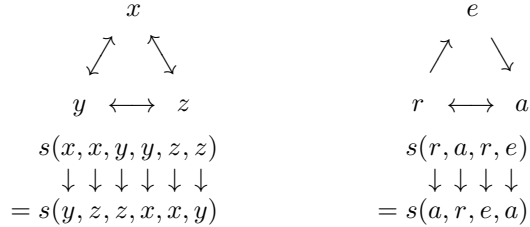

\centerline{\vbox{%
\halign{\hfil#\hfil&\kern2cm\hfil#\hfil\cr
\siggerspic y\longleftrightarrow x\longleftrightarrow z
            \longleftrightarrow
&%
\siggerspic r\longrightarrow e\longrightarrow a
    \longleftrightarrow
\cr
\noalign{\medskip}
\siggerseq{
  & s(&x&,&x&,&y&,&y&,&z&,&z&)\cr
  &  &\V&&\V&&\V&&\V&&\V&&\V&  \cr
 =& s(&y&,&z&,&z&,&x&,&x&,&y&)\cr
}
&%
\siggerseq{
  & s(&r&,&a&,&r&,&e&) \cr
  &  &\V&&\V&&\V&&\V&  \cr
 =& s(&a&,&r&,&e&,&a&) \cr
}
\cr}}}
\caption{Graphs corresponding to the 6-ary and 4-ary Siggers terms}
\end{figure}

Since $\alg A$ satisfies a certain loop condition if and only if the
variety generated by $\alg A$ satisfies it, it suffices to focus on
varieties.

\begin{proposition}
  \label{lc-basic-char}
The following are equivalent for a variety $\relstr V$ and a loop
condition $C$.
\begin{enumerate}
\item $\var V$ satisfies $C$.
\item Let $\alg F$ be the free algebra freely generated by the set of
  variables in $C$, and let $\relstr G$ be a digraph compatible with $\alg F$
  containing all the edges of $\relstr G_c$. Then $\relstr G$ have a loop.
\item For any algebra $\alg A\in\var V$ and any digraph $\relstr G$
  compatible with $\alg A$, if there is a digraph homomorphism
  $\relstr G_C\to\relstr G$, then $\relstr G$ has a loop.
\end{enumerate}
\end{proposition}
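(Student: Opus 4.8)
The plan is to establish the cycle of implications (1)$\Rightarrow$(3)$\Rightarrow$(2)$\Rightarrow$(1); only the last of these carries any real content, the others being a direct application of compatibility and of the defining property of $\relstr G_C$.

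For (1)$\Rightarrow$(3), suppose $\var V$ satisfies $C$ through an $n$-ary term $t$, let $\relstr G = (A, G)$ be a digraph compatible with $\alg A \in \var V$, and let $h \colon \relstr G_C \to \relstr G$ be a digraph homomorphism. Each $(x_i, y_i)$ is an edge of $\relstr G_C$, so each $(h(x_i), h(y_i))$ lies in $G$; applying the operation $t$ coordinatewise to these $n$ edges and using compatibility, $\bigl( t(h(x_1), \ldots, h(x_n)), t(h(y_1), \ldots, h(y_n)) \bigr) \in G$. Reading the identity $t(x_1, \ldots, x_n) = t(y_1, \ldots, y_n)$ under the assignment $v \mapsto h(v)$ of the variable set $V$ shows that the two coordinates agree, so this is a loop. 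For (3)$\Rightarrow$(2) I would simply instantiate (3) at $\alg A = \alg F$: a digraph $\relstr G$ compatible with $\alg F$ and containing all edges of $\relstr G_C$ receives a digraph homomorphism from $\relstr G_C$, namely the map taking each variable of $C$ to the corresponding free generator, so (3) yields a loop.

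The heart of the argument is (2)$\Rightarrow$(1). I would take for $\relstr G$ the \emph{smallest} compatible digraph on $F$ containing the edges of $\relstr G_C$: its edge set is the subuniverse of $\alg F \times \alg F$ generated by the pairs $(x_1, y_1), \ldots, (x_n, y_n)$ (each coordinate being the appropriate free generator). This is a subalgebra of $\alg F^2$, hence compatible with $\alg F$, and it contains every edge of $\relstr G_C$, so (2) gives a loop $(l, l)$. By the standard description of generated subuniverses, $(l, l)$ is the value of some $n$-ary term operation at a tuple of generators, i.e.\ $(l,l) = t\bigl((x_1, y_1), \ldots, (x_n, y_n)\bigr)$ in $\alg F^2$ for some term $t$; reading this coordinatewise gives $t(x_1, \ldots, x_n) = l = t(y_1, \ldots, y_n)$ in $\alg F$. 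Since these equalities hold between term operations evaluated at (distinct) free generators, the defining property of $\alg F$ lifts them to the identity $t(x_1, \ldots, x_n) = t(y_1, \ldots, y_n)$ valid throughout $\var V$, which is exactly the statement that $\var V$ satisfies $C$.

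Essentially everything here is bookkeeping and I expect no genuine obstacle, but one point deserves care: the variables listed in $C$ need not be distinct. To invoke the free-algebra property cleanly one should first re-present $t(x_1, \ldots, x_n)$ and $t(y_1, \ldots, y_n)$ as $m$-ary terms in the distinct variables $v_1, \ldots, v_m$ of $V$ (by composing $t$ with suitable projections), observe that these two $m$-ary terms take the common value $l$ on the generators $v_1, \ldots, v_m$, transfer that identity to all of $\var V$, and only then unfold it back into the form $t(x_1, \ldots, x_n) = t(y_1, \ldots, y_n)$.
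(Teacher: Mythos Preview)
Your proof is correct and follows exactly the approach the paper intends: the paper does not spell out a proof here but points to the argument for Theorem~\ref{siggers-thm}, which is precisely your (2)$\Rightarrow$(1) step (take the subuniverse of $\alg F^2$ generated by the edge pairs, extract a term witnessing the loop, and use freeness). Your handling of (1)$\Rightarrow$(3) and (3)$\Rightarrow$(2) is the standard unpacking, and your remark about first passing to terms in the distinct variables of $V$ before invoking freeness is a valid and careful point.
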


The proof of the proposition is just a direct generalization of the
proof of Theorem~\ref{siggers-thm}. For a general proof, we refer the
reader to~\cite{LoopConditions}.

It is apparent from the proposition that a validity of a loop
condition $C$ in a variety (algebra) is determined by the
digraph $\relstr G_C$. Based on that tight connection between a loop
condition and its digraph, we sometimes assign digraph attributes
(strongly connected, algebraic length) to a loop condition, meaning
the attributes of the corresponding digraph.
For a general finite digraph $\relstr G$ we also talk about the
$\relstr G$ loop condition -- that refers to any loop condition $C$
such that $\relstr G_C \simeq \relstr G$.

A loop condition is called \emph{trivial} if it is satisfied in every
algebra, equivalently if its digraph contains a loop. Otherwise, the
loop condition is called \emph{non-trivial}.

A simple reason for implication between loop conditions,
directly obtained from item (iii) of Proposition~\ref{lc-basic-char},
is the following theorem.
\begin{proposition}
  \label{homo-implication}
  If there is a digraph homomorphism $\relstr G\to\relstr H$, then
  every variety (algebra) satisfying the $\relstr G$ loop condition satisfies the
  $\relstr H$ loop condition as well.
\end{proposition}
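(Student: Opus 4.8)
The plan is to reduce the statement to the third item of Proposition~\ref{lc-basic-char}, which re-expresses satisfaction of a loop condition by a variety purely in terms of digraph homomorphisms into compatible digraphs. Fix a digraph homomorphism $g\colon\relstr G\to\relstr H$ and a variety $\var V$ satisfying the $\relstr G$ loop condition; this means $\var V$ satisfies some loop condition $C_G$ with $\relstr G_{C_G}\simeq\relstr G$, and likewise the $\relstr H$ loop condition is witnessed by some $C_H$ with $\relstr G_{C_H}\simeq\relstr H$ (by the remark that the $\relstr G$ loop condition is well defined up to the isomorphism type of its digraph, the choice of representatives is irrelevant). The goal is to show that $\var V$ satisfies $C_H$.

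First I would unfold this goal via Proposition~\ref{lc-basic-char}: it suffices to prove that for every algebra $\alg A\in\var V$ and every digraph $\relstr K$ compatible with $\alg A$, the existence of a digraph homomorphism $\relstr H\to\relstr K$ forces $\relstr K$ to contain a loop. So I would fix such an $\alg A$, such a $\relstr K$, and a homomorphism $h\colon\relstr H\to\relstr K$.

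Next I would simply compose: $h\circ g\colon\relstr G\to\relstr K$ is again a digraph homomorphism. Now I invoke Proposition~\ref{lc-basic-char} in the other direction, for the loop condition $C_G$ that $\var V$ is assumed to satisfy: since $\relstr G_{C_G}\simeq\relstr G$ maps homomorphically into the compatible digraph $\relstr K$, the digraph $\relstr K$ must have a loop. That is exactly the required conclusion, so $\var V$ satisfies $C_H$. The statement for a single algebra $\alg A$ rather than a variety is then immediate, since $\alg A$ satisfies a loop condition precisely when the variety it generates does.

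There is essentially no obstacle here; the proof is a short composition argument once Proposition~\ref{lc-basic-char} is available. The only point demanding care is notational hygiene: the symbol $\relstr G$ is used both for the fixed source of the loop-condition homomorphism and, inside Proposition~\ref{lc-basic-char}, for an arbitrary compatible digraph, so in the write-up I would rename the latter (as above, calling it $\relstr K$) to keep the composition $h\circ g$ unambiguous.
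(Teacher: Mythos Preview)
Your proof is correct and is exactly the argument the paper has in mind: the paper does not spell out a proof at all, merely remarking that the proposition is ``directly obtained from item (iii) of Proposition~\ref{lc-basic-char}'', and your composition argument $h\circ g\colon\relstr G\to\relstr K$ is precisely how one unpacks that remark. Your observation about renaming the compatible digraph to avoid the clash with the symbol $\relstr G$ is a sensible bit of hygiene.
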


Nevertheless, this is not by far the only way for obtaining an
implication between loop conditions. It is proved
in~\cite{LoopConditions} that all non-trivial undirected non-bipartite
loop conditions are equivalent. If a variety (or an algebra)
satisfies a non-trivial loop condition, we call it
\emph{loop-producing}. Since any finite loopless digraph can be
homomorphically mapped to a large enough clique, we get the following
corollary of that result.

\begin{proposition}
  \label{loop-prod-char}
  Let $\var V$ be a variety. The following are equivalent.
  \begin{enumerate}
  \item $\relstr V$ is a loop-producing variety,
  \item Let $\relstr K_\omega$ denote the infinite countable clique,
    and let $\alg F$ be a $\var V$-free algebra freely generated by
    the nodes of $\relstr K_\omega$. Then the graph generated by edges
    of $\relstr K_\omega$ has a loop.
  \item $\relstr V$ has a 6-ary Siggers term $s(x,x,y,y,z,z)=s(y,z,z,x,x,y)$,
  \item $\relstr V$ satisfies all undirected non-bipartite loop
    conditions,
  \item Whenever $\alg A\in\var V$ is compatible with a digraph
    $\relstr G$ such that there is homomorphism
    $\relstr C_{2n+1}\to \relstr G$ for some $n$, then $\relstr G$ has
    a loop.
  \end{enumerate}
\end{proposition}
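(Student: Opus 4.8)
The plan is to obtain the proposition as a corollary of three facts recorded above: item (iii) of Proposition~\ref{lc-basic-char} (the characterisation of ``$\var V$ satisfies $C$'' by the absence of a loopless digraph compatible with some $\alg A\in\var V$ that admits a homomorphism from $\relstr G_C$), Proposition~\ref{homo-implication} (monotonicity of loop conditions along digraph homomorphisms of the associated digraphs), and the theorem of~\cite{LoopConditions} that all non-trivial undirected non-bipartite loop conditions are equivalent. Concretely I will run the single cycle of implications
$$
\text{(i)}\Rightarrow\text{(iv)}\Rightarrow\text{(iii)}\Rightarrow\text{(v)}\Rightarrow\text{(ii)}\Rightarrow\text{(i)}.
$$

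For (i)$\Rightarrow$(iv): if $\var V$ satisfies a non-trivial loop condition $C$, then $\relstr G_C$ is a finite loopless digraph, hence---as remarked just before the statement---it admits a homomorphism into $\relstr K_N$ for every sufficiently large $N$, in particular into some $\relstr K_N$ with $N\ge 3$. By Proposition~\ref{homo-implication}, $\var V$ satisfies the $\relstr K_N$ loop condition, and for $N\ge 3$ this is a non-trivial ($\neq$ has no loops), undirected ($\neq$ is symmetric) and non-bipartite (it contains a triangle) loop condition; by the equivalence theorem of~\cite{LoopConditions} it follows that $\var V$ satisfies every non-trivial undirected non-bipartite loop condition. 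Since the trivial ones hold in all algebras, (iv) follows. The implication (iv)$\Rightarrow$(iii) is immediate, for the Siggers condition $s(x,x,y,y,z,z)=s(y,z,z,x,x,y)$ is one such condition: its digraph is exactly $\relstr K_3$ (equivalently $\relstr C_3$), the six ordered pairs $(x,y),(x,z),(y,z),(y,x),(z,x),(z,y)$ being precisely the edges of the clique on $\{x,y,z\}$.

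For (iii)$\Rightarrow$(v): the Siggers ($=\relstr K_3$) loop condition is non-trivial, undirected and non-bipartite, so by the equivalence theorem $\var V$ satisfies the $\relstr C_{2n+1}$ loop condition for every $n\ge 1$ (an odd symmetric cycle being loopless and non-bipartite); item (iii) of Proposition~\ref{lc-basic-char} then says that any $\alg A\in\var V$ compatible with a digraph $\relstr G$ admitting a homomorphism $\relstr C_{2n+1}\to\relstr G$ has a loop, and for $n=0$ this holds trivially since already $\relstr C_1$ has a loop---so (v) holds. For (v)$\Rightarrow$(ii): with $\alg F$ the $\var V$-free algebra on the nodes of $\relstr K_\omega$ and $\relstr G$ the digraph on $\alg F$ generated by the edges of $\relstr K_\omega$, the digraph $\relstr G$ is compatible with $\alg F\in\var V$ and any three distinct generators carry a homomorphic image of $\relstr C_3$ inside $\relstr G$ (all pairs of distinct generators are among the generating edges), so (v) with $n=1$ forces a loop in $\relstr G$, which is (ii). For (ii)$\Rightarrow$(i): a loop $l$ of $\relstr G$ satisfies $l=t(a_1,\dots,a_k)=t(b_1,\dots,b_k)$ for some term $t$ and pairs $(a_i,b_i)$ of \emph{distinct} generators; re-indexing the finitely many generators that occur into a single tuple $g_1,\dots,g_m$ and rewriting both sides as terms in $g_1,\dots,g_m$, the freeness of $\alg F$ promotes this equality to an identity valid throughout $\var V$, and the loop condition it defines is non-trivial precisely because $a_i\ne b_i$ for all $i$; hence $\var V$ is loop-producing.

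There is no genuine obstacle here: the substance is entirely the equivalence of non-trivial undirected non-bipartite loop conditions imported from~\cite{LoopConditions}, while the rest is bookkeeping with Propositions~\ref{lc-basic-char} and~\ref{homo-implication}. The two points that call for a little care are (a) the reduction of an arbitrary non-trivial loop condition to a $\relstr K_N$ loop condition---this is where the hypothesis ``non-trivial'' is used, namely that $\relstr G_C$ is loopless, together with the observation that a clique on at least three vertices is loopless, symmetric and non-bipartite---and (b) the free-algebra transfer in (ii)$\Rightarrow$(i), a routine elaboration of the last step in the proof of Theorem~\ref{siggers-thm}: one has to pass from an equality of two \emph{differently} indexed evaluations of a single term on distinct generators to a bona fide identity by collecting the occurring generators into a common argument tuple.
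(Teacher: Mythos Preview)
Your proof is correct and follows precisely the route the paper indicates: the paper states Proposition~\ref{loop-prod-char} without proof, as a corollary of the equivalence theorem from~\cite{LoopConditions} together with the remark that ``any finite loopless digraph can be homomorphically mapped to a large enough clique,'' and your cycle of implications is exactly the natural unpacking of this via Propositions~\ref{lc-basic-char} and~\ref{homo-implication}. The only places requiring thought---the reduction of an arbitrary non-trivial loop condition to a clique condition in (i)$\Rightarrow$(iv), and the free-algebra argument in (ii)$\Rightarrow$(i)---are handled correctly.
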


Less formally, it suffices to test the loop-producing property on
large cliques, but then it can be applied to any symmetric cycle of
odd length.

\section{The proof of the loop condition collapse}
\label{main-proof}

In this section, we prove the following.

\begin{theorem}
\label{main-thm}
Let $\relstr G$ be a strongly connected digraph with algebraic
length 1, and $\var V$ be a loop-producing variety. Then $\var V$
satisfies the $\relstr G$ loop condition.
\end{theorem}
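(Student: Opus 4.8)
The plan is to reduce the $\relstr G$ loop condition to one that we already know holds in every loop-producing variety, namely the condition from Proposition~\ref{loop-prod-char}(iv) (all undirected non-bipartite loop conditions), using the pp-construction machinery described in Section~\ref{preliminaries}. By Proposition~\ref{lc-basic-char}(iii), it suffices to show: whenever $\alg A \in \var V$ is compatible with a digraph $\relstr H$ admitting a homomorphism $\relstr G_C \to \relstr H$ (where $C$ is a $\relstr G$ loop condition, so $\relstr G_C \simeq \relstr G$), then $\relstr H$ has a loop. Equivalently, working with the free algebra $\alg F$ over the vertex set $V$ of $\relstr G$ and the digraph $\relstr H_0$ on $F$ generated by the edges of $\relstr G$, I must produce a loop in $\relstr H_0$.

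The key idea is to build, via the pp-construction with template digraphs $\relstr V, \relstr E$ and homomorphisms $\phi_0, \phi_1$, a new digraph $\relstr H'$ compatible with a subalgebra $\alg B \le \alg A^{|V_v|}$ into which a long odd symmetric cycle $\relstr C_{2m+1}$ maps homomorphically; then Proposition~\ref{loop-prod-char}(v) produces a loop in $\relstr H'$, and I pull it back to a loop in the original digraph. Concretely: since $\relstr G$ is finite, strongly connected, and has algebraic length $1$, the Proposition in the preliminaries gives a surjective-on-edges homomorphism $\DC_n \to \relstr G$ for every large enough $n$ (all such $n$, since $al(\relstr G)=1$). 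Pick such an $n$; this lets me encode a directed $n$-cycle that ``tours'' all edges of $\relstr G$. I will take $\relstr E$ to be essentially $\relstr G$ itself (or a suitable tour of it) so that a homomorphism $\relstr E \to \relstr H_0$ is the same as an edge-labelled copy of $\relstr G$ inside $\relstr H_0$, and $\relstr V$ together with $\phi_0,\phi_1$ arranged so that the resulting digraph $\relstr H'$ on mappings $\relstr V \to \relstr H_0$ has the feature that a closed walk witnessing algebraic length $1$ in $\relstr G$ becomes an \emph{undirected} odd closed walk in $\relstr H'$ — using strong connectedness to convert each backward step into a forward detour of length $n-1$ around the edge-tour, exactly as in the proof of the Proposition in the preliminaries. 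The arithmetic (choosing $n$ and the detour lengths so the total length is odd) is where algebraic length $1$ is essential: it guarantees an oriented cycle of odd algebraic length, hence after the substitution an odd closed directed walk, which in the symmetrized picture gives a homomorphism from some $\relstr C_{2m+1}$.

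Once $\relstr C_{2m+1} \to \relstr H'$ is established, Proposition~\ref{loop-prod-char}(v) yields a loop $(b,b)$ in $\relstr H'$ with $b \in B \le A^{|V_v|}$. Unwinding the pp-construction, a loop $b \redgev b$ in $\relstr H'$ means there is a homomorphism $e\colon \relstr E \to \relstr H_0$ with $b = e \circ \phi_0 = e \circ \phi_1$; by the design of $\phi_0,\phi_1$ this forces $e$ to identify two vertices of $\relstr E$ that are the two endpoints of an edge of $\relstr G$ (the edge whose algebraic-length contribution made the cycle close up), so $e$ maps that edge to a loop of $\relstr H_0$. Since every step (free algebra, pp-construction, Proposition~\ref{loop-prod-char}) is available for arbitrary, possibly infinite, algebras, this proves the $\relstr G$ loop condition holds in $\var V$.

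The main obstacle I expect is the bookkeeping in the second paragraph: designing $\relstr V$, $\relstr E$, $\phi_0$, $\phi_1$ so that (a) a homomorphism out of the constructed $\relstr H'$ genuinely corresponds to an edge-tour of $\relstr G$ inside $\relstr H_0$, (b) the backward-edge-to-forward-detour substitution is realized by actual edges of $\relstr H'$ rather than just informally, and (c) the total length comes out odd so that Proposition~\ref{loop-prod-char}(v) applies — and then verifying that a loop in $\relstr H'$ really does collapse a $\relstr G$-edge in $\relstr H_0$ rather than merely collapsing two existentially quantified vertices with no edge between them. Everything else (compatibility, finiteness of the templates, Schur-type choice of $n$) is routine given the preliminaries.
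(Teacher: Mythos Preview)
Your proposal is not a proof; it is a one-step strategy whose crucial ingredient is left entirely unspecified, and the obstacle you flag at the end is real and, as stated, fatal. Concretely: for a loop in your pp-constructed $\relstr H'$ to yield a loop in $\relstr H_0$, you need that \emph{every} homomorphism $e\colon\relstr E\to\relstr H_0$ satisfying $e\circ\phi_0=e\circ\phi_1$ collapses some edge of $\relstr E$; equivalently, the quotient $\relstr E/{\sim}$ (with $\sim$ generated by $\phi_0(w)\sim\phi_1(w)$) must already contain a loop. But then no homomorphism $\relstr E\to\relstr G$ can factor through that quotient (since $\relstr G$ is loopless), so every edge of $\relstr H'$ you build over the canonical copy of $\relstr G$ must come from an $e$ that separates some pair $\phi_0(w),\phi_1(w)$. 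You give no construction of $\relstr V,\relstr E,\phi_0,\phi_1$ meeting both this constraint and the requirement that an odd undirected cycle appear in $\relstr H'$, and the hand-wave ``using strong connectedness to convert each backward step into a forward detour'' does not address it. The sentence ``by the design of $\phi_0,\phi_1$ this forces $e$ to identify two endpoints of an edge of $\relstr G$'' is exactly the unproved assertion.

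The paper's proof is structurally very different and explains why a single pp-construction is not expected to suffice. It proceeds through a long chain of auxiliary loop conditions, built from three families of template digraphs $\CLQP(k,l,s)$, $\CCLW(k,l,c)$, $\DCP(a,b)$, and establishes implications of the form ``if $\var V$ satisfies loop condition $X$ then it satisfies loop condition $Y$''. At each such step one performs a pp-construction, finds an odd cycle or triangle in the derived digraph, applies the loop-producing property to get a loop \emph{there}, and then unwinds that loop not to a loop in the original digraph but to a homomorphism from the \emph{previous} template $X$ into the original digraph --- whereupon the already-established $X$ loop condition finishes the job. Thus the loop-producing hypothesis is invoked many times, once per reduction, in a bootstrapping that gradually climbs from $\relstr K_3$ up to $\DCP(2^p,c)$ for arbitrarily large $p$ and odd $c$, which finally maps onto any strongly connected $\relstr G$ of algebraic length~$1$. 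Your plan tries to collapse this entire ladder into one rung; nothing in the proposal indicates how to do that, and the paper's need for the intermediate ``loop symbol'' bookkeeping (eliminated one at a time via Lemmas~\ref{lm-clqp-reduce} and~\ref{lm-cclw-reduce}) strongly suggests it cannot be done so directly.
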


From basic properties of loop conditions we then obtain the following
corollaries.

\begin{corollary}
Let $\alg A$ be a loop-producing algebra compatible with a digraph
$\relstr G$. If $\relstr G$ has a strongly connected component with
algebraic length one, then $\relstr G$ has a loop.
\end{corollary}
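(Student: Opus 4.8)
The plan is to prove Theorem~\ref{main-thm} and then obtain the corollary by a routine application of Proposition~\ref{lc-basic-char}(iii). So the real work is the theorem: given a strongly connected digraph $\relstr G$ with $al(\relstr G)=1$ and a loop-producing variety $\var V$, show $\var V$ satisfies the $\relstr G$ loop condition. By Proposition~\ref{loop-prod-char} we already know $\var V$ satisfies all undirected non-bipartite loop conditions, in particular the $\relstr C_{2m+1}$ loop condition for every $m$; the task is to leverage that against an arbitrary strongly connected $\relstr G$ of algebraic length one. The natural strategy is a pp-construction (the ``edge--vertex'' construction spelled out in the preliminaries): build an auxiliary digraph $\relstr H$, compatible with some $\alg B\leq\alg A^{k}$ for $\alg A\in\var V$, such that (a) $\relstr H$ admits a homomorphism from some long odd symmetric cycle $\relstr C_{2m+1}$, hence has a loop by the loop-producing hypothesis, and (b) any loop in $\relstr H$ yields a homomorphism $\relstr G\to\relstr G'$ for the original compatible digraph $\relstr G'$, hence a loop there. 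In other words, I would design template digraphs $\relstr V,\relstr E$ and homomorphisms $\phi_0,\phi_1\colon\relstr V\to\relstr E$ so that ``an $\relstr H$-edge'' encodes ``a $\relstr G$-to-target homomorphism restricted along an edge of $\relstr G$'', and ``an $\relstr H$-loop'' encodes ``a full homomorphism from $\relstr G$''.

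Concretely, I expect the template $\relstr V$ to be (a suitable blow-up of) the vertex set of $\relstr G$ and the template $\relstr E$ to encode the edge relation of $\relstr G$, with $\phi_0,\phi_1$ picking out the tail and head patterns; a vertex of $\relstr H$ is then a labelling of $\relstr G$'s vertices by target vertices, and an $\relstr H$-edge glues two such labellings that agree on a common edge of $\relstr G$. The first half, producing a loop in $\relstr H$, is where strong connectedness and algebraic length one are used: by the Proposition of Section~\ref{preliminaries} on strongly connected digraphs, there is for all large $N\equiv 0\pmod{al(\relstr G)}=0\pmod 1$ a surjective-on-edges homomorphism $\DC_N\to\relstr G$, i.e. a closed directed walk through every edge of $\relstr G$; unfolding such a walk and then ``folding'' it against an odd cycle (using $al=1$ to pay for the parity) should give a homomorphism $\relstr C_{2m+1}\to\relstr H$ for some $m$, at which point Proposition~\ref{loop-prod-char}(v) hands us a loop in $\relstr H$. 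The second half, decoding the loop back into a loop of $\relstr G'$, should be essentially bookkeeping once the encoding is set up so that an $\relstr H$-loop is literally a homomorphism $\relstr G\to\relstr G'$ (and then apply Proposition~\ref{homo-implication} or just Proposition~\ref{lc-basic-char}(iii) directly).

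The main obstacle, I anticipate, is the parity/algebraic-length manipulation in the first half: arranging the walk through $\relstr G$ together with the pp-construction so that the resulting cycle in $\relstr H$ is both odd and of bounded (or at least controllable) length, using only that $al(\relstr G)=1$ rather than, say, that $\relstr G$ already contains an odd cycle. Strong connectedness gives long closed directed walks covering all edges, and $al=1$ gives a closed oriented walk of odd algebraic length; the delicate point is to combine these into a single closed walk in the auxiliary digraph whose \emph{parity} (length mod $2$) is odd, since that is what Proposition~\ref{loop-prod-char}(v) requires. I would handle this by first passing to a closed directed walk $W$ in $\relstr G$ of some length $N$ that is surjective on edges, using it to build the ``backbone'' of $\relstr H$, and then splicing in a short oriented closed walk of algebraic length $1$ to adjust the parity, checking that the pp-construction faithfully transports these walks. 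Getting the quantifiers right — which $m$, which $N$, which power $k$ of $\alg A$ — is the fiddly part, but conceptually it is the standard ``collapse via pp-construction plus Schur's theorem'' template, and once the auxiliary digraph is correctly specified the two implications (loop in $\relstr H$ exists; loop in $\relstr H$ forces loop in $\relstr G'$) fall out.

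\begin{proof}[Proof of the Corollary]
  Let $\relstr G_0$ be a strongly connected component of $\relstr G$ with $al(\relstr G_0)=1$, and let $\relstr G'$ be the induced subdigraph of $\relstr G$ on the vertex set of $\relstr G_0$; then $\relstr G'$ has $\relstr G_0$ as a spanning subdigraph, so $\relstr G'$ is itself strongly connected of algebraic length one (the extra edges can only decrease, hence preserve the gcd at $1$). The algebra $\alg A$ is loop-producing and compatible with $\relstr G'$ as well. By Theorem~\ref{main-thm} the variety generated by $\alg A$ satisfies the $\relstr G'$ loop condition, so by Proposition~\ref{lc-basic-char}(iii), applied with the identity homomorphism $\relstr G_{C}\simeq\relstr G'\to\relstr G'$, the digraph $\relstr G'$ has a loop. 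A loop of $\relstr G'$ is a loop of $\relstr G$.
\end{proof}
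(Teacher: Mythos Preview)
The paper's proof of this corollary is a one-line application of Theorem~\ref{main-thm} together with Proposition~\ref{lc-basic-char}, $(1)\Rightarrow(3)$: the strongly connected component $\relstr G_0$ has algebraic length one, so the variety generated by $\alg A$ satisfies the $\relstr G_0$ loop condition; since the inclusion $\relstr G_0\hookrightarrow\relstr G$ is a digraph homomorphism and $\alg A$ is compatible with $\relstr G$, the digraph $\relstr G$ itself has a loop. Your displayed proof follows the same template but introduces a genuine gap: you assert that ``$\alg A$ is \ldots\ compatible with $\relstr G'$ as well'', where $\relstr G'$ is the induced subdigraph on the vertex set of the component. This is not justified --- the vertex set of a strongly connected component of a compatible digraph need not be a subuniverse of $\alg A$, and in any case $\alg A$ has the wrong universe to be ``compatible with'' a digraph on a proper subset. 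The fix is simply to drop $\relstr G'$ and apply Proposition~\ref{lc-basic-char}(iii) to $\relstr G$ directly, with the inclusion $\relstr G_0\to\relstr G$ as the required homomorphism. (Incidentally, a strongly connected component is already the induced subdigraph on its vertex set, so $\relstr G'=\relstr G_0$ and the ``extra edges'' discussion is vacuous.)

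The long preamble sketching a proof of Theorem~\ref{main-thm} is not part of what the corollary requires --- the theorem is proved beforehand and the corollary merely cites it. For the record, your sketch proposes a single pp-construction reducing directly to an odd symmetric cycle; the paper's actual proof of the theorem is far more elaborate, passing through a chain of seven lemmas and three auxiliary digraph families ($\CLQP$, $\CCLW$, $\DCP$), with a separate pp-construction at each reduction step. There is no evident single template $(\relstr V,\relstr E,\phi_0,\phi_1)$ for which an $\relstr H$-loop decodes to a full homomorphism from $\relstr G$ while at the same time $\relstr H$ receives an odd symmetric cycle; the paper's layered approach is precisely how these two competing requirements are reconciled.
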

\begin{proof}
  It follows from Theorem~\ref{main-thm} and
  Proposition~\ref{lc-basic-char}, implication $(1)\Rightarrow(3)$.
\end{proof}
\begin{corollary}
  \label{loop-cond-eq}
All the non-trivial loop conditions that have a strongly connected
component of algebraic length one are equivalent.
\end{corollary}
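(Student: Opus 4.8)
The plan is to reduce the statement to Theorem~\ref{main-thm} using only two elementary facts from the preliminaries: that a variety satisfying a non-trivial loop condition is, by definition, loop-producing, and that a digraph homomorphism $\relstr G\to\relstr H$ makes the $\relstr G$ loop condition imply the $\relstr H$ loop condition (Proposition~\ref{homo-implication}). Since the relation ``equivalent'' is symmetric, it suffices to prove the following: if $C_1$ and $C_2$ are non-trivial loop conditions and each of the digraphs $\relstr G_{C_1}$, $\relstr G_{C_2}$ has a strongly connected component of algebraic length one, then every variety $\var V$ satisfying $C_1$ also satisfies $C_2$.

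So fix such a variety $\var V$. First, since $C_1$ is non-trivial and $\var V$ satisfies it, $\var V$ is loop-producing. Next, let $\relstr S$ be a strongly connected component of $\relstr G_{C_2}$ with $al(\relstr S)=1$; note that $\relstr S$ is a genuine (finite) strongly connected digraph of the kind Theorem~\ref{main-thm} applies to. Indeed, $\relstr S$ cannot consist of a single looped vertex, for then $\relstr G_{C_2}$ would contain a loop and $C_2$ would be trivial; and $\relstr S$ cannot be an edgeless single vertex, since such a component has algebraic length $\infty$ rather than $1$. Hence Theorem~\ref{main-thm}, applied to the digraph $\relstr S$ and the loop-producing variety $\var V$, shows that $\var V$ satisfies the $\relstr S$ loop condition.

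Finally, the inclusion of the component $\relstr S$ into $\relstr G_{C_2}$ is a digraph homomorphism $\relstr S\to\relstr G_{C_2}$, so by Proposition~\ref{homo-implication} the variety $\var V$, which satisfies the $\relstr S$ loop condition, also satisfies the $\relstr G_{C_2}$ loop condition -- that is, $C_2$. This proves the claimed implication, and with it the corollary. There is no real obstacle remaining here: all of the substance has been absorbed into Theorem~\ref{main-thm}, and what is left is the bookkeeping above, the only delicate point being the observation that a strongly connected component of algebraic length one inside the digraph of a non-trivial loop condition is automatically nondegenerate, so that Theorem~\ref{main-thm} genuinely applies to it. (In particular, this also pins all such loop conditions to the $6$-ary Siggers condition of Proposition~\ref{loop-prod-char}.)
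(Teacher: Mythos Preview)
Your proof is correct and follows essentially the same route as the paper: a variety satisfying a non-trivial loop condition is loop-producing by definition, Theorem~\ref{main-thm} then yields the loop condition for the strongly connected component $\relstr S$ of algebraic length one, and the inclusion $\relstr S\hookrightarrow\relstr G_{C_2}$ transfers this to $C_2$ via Proposition~\ref{homo-implication}. Your extra care about the nondegeneracy of $\relstr S$ is harmless but not strictly needed, since Theorem~\ref{main-thm} is vacuously true when $\relstr S$ already has a loop.
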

\begin{proof}
  Any variety satisfying such a loop conditions is
  loop-producing by non-triviality. Conversely, any loop-producing
  variety satisfy all the loop conditions with algebraic length one by
  Theorem~\ref{main-thm}. Finally, all the non-trivial loop conditions
  that have a strongly connected component of algebraic length one are
  satisfied by Proposition~\ref{homo-implication}.
\end{proof}
\begin{corollary}
  Any loop-producing variety has a 4-ary Siggers term
  $s(r,a,r,e) = s(a,r,e,a)$.
\end{corollary}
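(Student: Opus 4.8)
The plan is to read off the statement as a direct instance of Theorem~\ref{main-thm}. Let $C$ be the loop condition asserting the existence of a $4$-ary term $s$ with $s(r,a,r,e)=s(a,r,e,a)$. By the correspondence between loop conditions and digraphs from Section~\ref{preliminaries}, the associated digraph $\relstr G_C$ has vertex set $\{r,a,e\}$ and edge set $\{(r,a),(a,r),(r,e),(e,a)\}$, obtained by pairing the corresponding argument positions of the two sides of the identity. It then suffices to verify that $\relstr G_C$ is strongly connected with $al(\relstr G_C)=1$; Theorem~\ref{main-thm} applied to a loop-producing variety $\var V$ immediately gives that $\var V$ satisfies $C$, which is the claim.

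So the entire content is two routine graph checks. For strong connectedness: from $r$ one reaches $a$ and $e$ along single edges; from $a$ one reaches $r$ along a single edge and then $e$ through $r$; from $e$ one reaches $a$ along a single edge and then $r$ through $a$. For the algebraic length: the directed $2$-cycle $r\to a\to r$ has algebraic length $2$, the directed $3$-cycle $r\to e\to a\to r$ has algebraic length $3$, and since $\gcd(2,3)=1$ we get $al(\relstr G_C)=1$ (it is certainly not $\infty$, as cycles of nonzero algebraic length occur).

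There is essentially no obstacle here: the corollary is just the specialization of the main theorem to this one small digraph, strengthening the finite-algebra result of~\cite{OptimalStrong} to arbitrary loop-producing varieties. Alternatively one could invoke Corollary~\ref{loop-cond-eq}, but deriving the statement straight from Theorem~\ref{main-thm} is the cleanest route.
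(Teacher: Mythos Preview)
Your proof is correct and matches the paper's own argument exactly: the paper simply observes that the rare-area loop condition is strongly connected with algebraic length one and invokes Theorem~\ref{main-thm}. You have merely made explicit the verification of those two digraph properties that the paper leaves to the reader.
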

\begin{proof}
  This follows directly from Theorem~\ref{main-thm} since the
  rare-area loop condition is an example of a strongly connected loop
  condition with algebraic length one.
\end{proof}

\subsection{Digraph definitions}

We provide three concrete types of digraphs that will serve as
intermediate steps in the proof of Theorem~\ref{main-thm}.

\begin{definition}
Given cycle lengths $a,b\geq 1$, we define the digraph $\DCP(a,b)$
(directed cycle pair) as follows. The nodes are
$A_0, \ldots, A_{a-1}$ and $B_0, \ldots, B_{b-1}$ such that
$A_0 = B_0$ while all the other are different. Edges goes from $A_i$
to $A_{i+1}$ modulo $a$ and from $B_i$ to $B_{i+1}$ modulo $b$.
\end{definition}

\medskip
\begin{figure}[!ht]
  \def\svgwidth{5cm}
  \centering
     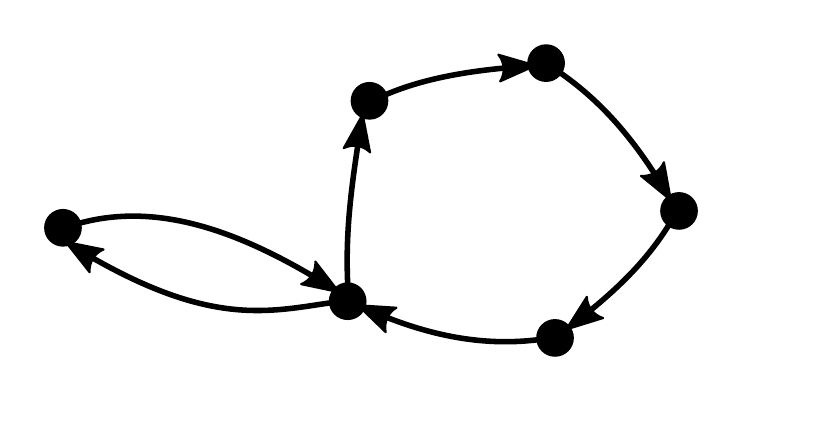
  \caption{The digraph $\DCP(2,5)$.}
\end{figure}

\begin{definition}
For a given path length $k\geq1$, number $l\geq0$ of loop symbols, and
cycle size $c\geq1$, the digraph $\CCLW(k, l, s)$ (cycle walks) is
defined as follows.
Let $\alphabet A = \{0,1,\ldots,c-1\}$ and $\alphabet L$ be an alphabet
of size $l$ not containing $\None$. The nodes of $\CCLW(k, l, c)$ are
the sequences $(a_1, l_1, a_2, l_2, \ldots. l_{k-1}, a_k)$ such that:
\begin{itemize}
\item every $a_i\in\alphabet A$ and every $l_i\in\alphabet L\cup\{\None\}$,
\item loop symbols do not repeat,
      that is, if $l_i=l_j$, then $i=j$ or $l_i=l_j=\None$,
\item If $l_i\in\alphabet L$, then $a_i=a_{i+1}$. Otherwise
$a_i\equiv a_{i+1}\pm1\pmod c$.
\end{itemize}
There is an edge $(n_1, n_2)\in \CCLW(k, l, s)$ if there is a node $n$
in $\CCLW(k+1, l, s)$ such that $n_1$ is a prefix of $n$ and $n_2$ is
a suffix of $n$.
\end{definition}

\begingroup
\everymath={\scriptstyle}
\medskip
\begin{figure}[!ht]
  \def\svgwidth{12cm}
  \centering
     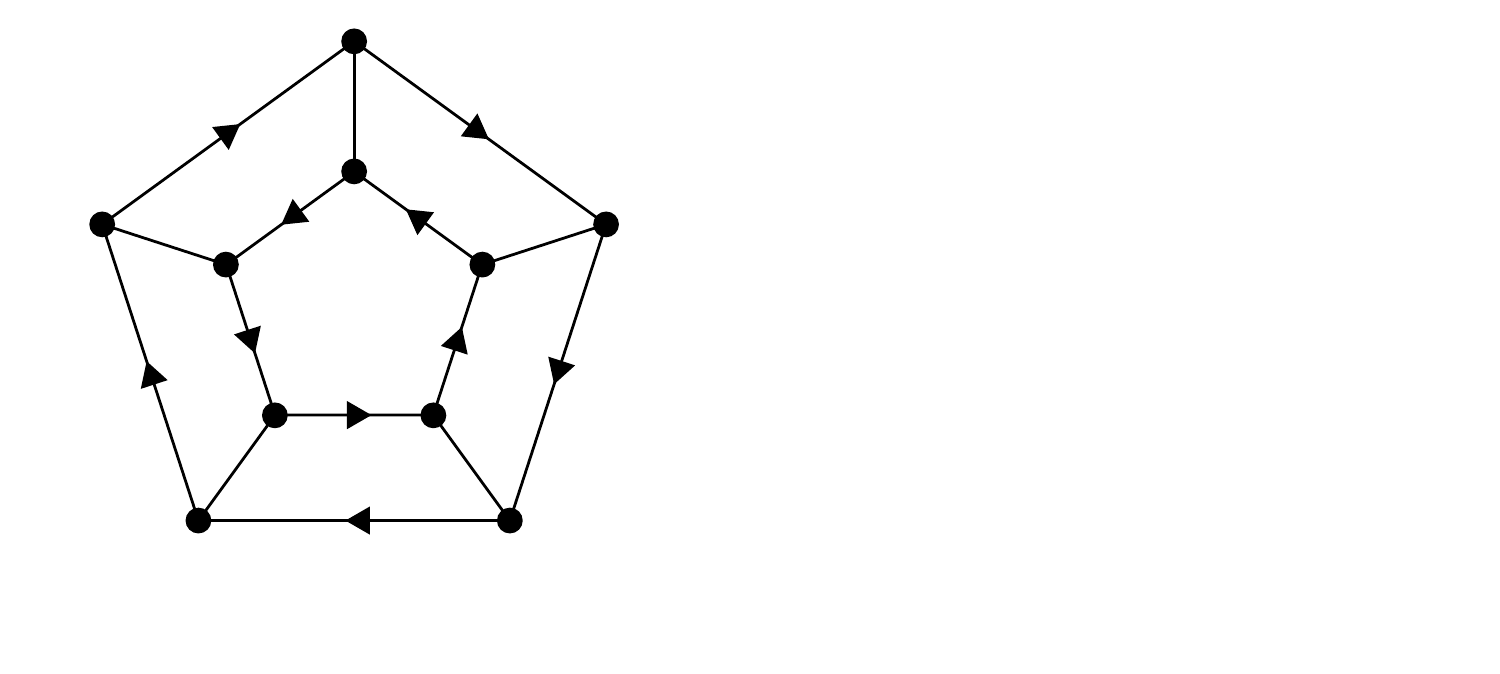
  \caption{Examples of digraphs of cycle walks. The loop
  symbol is ommited since it is uniquely determined by adjacent
  items.}
\end{figure}

\endgroup

\begin{definition}
For a given path length $k\geq1$, number $l\geq0$ of loop symbols, and
alphabet size $s\geq1$, the digraph $\CLQP(k, l, s)$ (clique paths) is
defined as follows.
Let $\alphabet A$ be an alphabet of size $s$ and $\alphabet L$ be an alphabet
of size $l$ not containing $\None$. The nodes of $\CLQP(k, l, s)$ are
the sequences $(a_1, l_1, a_2, l_2, \ldots. l_{k-1}, a_k)$ such that:
\begin{itemize}
\item every $a_i\in\alphabet A$ and every $l_i\in\alphabet L\cup\{\None\}$,
\item loop symbols do not repeat,
      that is, if $l_i=l_j$, then $i=j$ or $l_i=l_j=\None$,
\item letters repeat if and only if they are connected by loop
  symbols, that is, for any $i<j$: $a_i=a_j$ if and only if all
$l_i,\ldots,l_{j-1}\in\alphabet L$.
\end{itemize}
There is an edge $(n_1, n_2)\in \CLQP(k, l, s)$ if there is a node $n$
in $\CLQP(k+1, l, s)$ such that $n_1$ is a prefix of $n$ and $n_2$ is
a suffix of $n$.
\end{definition}

Note that cycle walks and clique paths differ just in the third point
where cycle walks allows repetition of letters but are more
restrictive on the local behavior.

\subsection{Proof outline}

Let $\var V$ be a fixed loop-producing variety.
We begin by stating seven lemmas.

\lemma[lm-start]
  The $\CLQP(1,0,3)$ loop condition is satisfied in $\var V$.
\endlemma
\lemma[lm-clqp-raise]
  For any $k\geq1, s\geq1$, the following implication between loop
  conditions holds for $\var V$:
  $$\CLQP(k,0,s)\Rightarrow\CLQP(k+1,s,1).$$
\endlemma
\lemma[lm-clqp-reduce]
  For any $k\geq2, l\geq1, s\geq1$, the following implication between loop
  conditions holds for $\var V$:
  $$\CLQP(k,l,s)\Rightarrow\CLQP(k, l-1,3s).$$
\endlemma
\lemma[lm-clqp-cclw]
  For any $k\geq1, l\geq0, c\geq1$, the following implication between loop
  conditions holds for $\var V$:
  $$\CLQP(k,l,1)\Rightarrow\CCLW(k,l,c).$$
\endlemma
\lemma[lm-cclw-reduce]
  For any $k\geq2, l\geq1, c\geq3$, $c$ odd, the following implication
  between loop conditions holds for $\var V$:
  $$\CCLW(k,l,c)\Rightarrow\CCLW(k,l-1,c).$$
\endlemma
\lemma[lm-cclw-dcp]
  For any odd $c\geq3$ there is some $k\geq2$ such that the following implication
  between loop conditions holds for $\var V$:
  $$\CCLW(k,0,c)\Rightarrow\DCP(2,c).$$
\endlemma
\lemma[lm-dcp-raise]
  For any $p,b\geq1$, $b$ odd, the following implication
  between loop conditions holds for $\var V$:
  $$\DCP(2^p,2^{p+1}+b)\Rightarrow\DCP(2^{p+1},b).$$
\endlemma

First, let us see how Theorem~\ref{main-thm} follows from these lemmas.
Using Lemma~\ref{lm-clqp-raise} once and Lemma~\ref{lm-clqp-reduce}
repeatedly yields the implication
$\CLQP(k,0,s)\Rightarrow\CLQP(k+1,0,3^s)$
for any $k,s\geq 1$.
Since the $\CLQP(1,0,3)$ loop condition holds in $\var V$ by
Lemma~\ref{lm-start}, for any $k\geq1$ there is $s\geq3$ such that
the $\CLQP(k,0,s)$ loop condition also holds in $\var V$.

Therefore by Lemmas~\ref{lm-clqp-raise},~\ref{lm-clqp-cclw}, also
the $\CCLW(k,l,c)$ loop condition holds in $\var V$ for any
$k\geq2, c\geq1$ and some $l$ depending on $k$. If $c$ is
odd and at least 3, we can reduce the number of loop symbols $s$ to zero by
repeated application of Lemma~\ref{lm-cclw-reduce}. So
the $\CLQP(k,0,c)$ loop condition holds in $\var V$ for any $k\geq2, c\geq3$, $c$ odd.
Thus by Lemma~\ref{lm-cclw-dcp} also the $DCP(2,c)$ loop condition holds for any odd
$c\geq3$.

Now, the $\DCP(2^p, c)$ loop condition holds for any odd $c\geq3$ and for any
$p\geq1$. This is done by induction on $p$ where the induction step is
given by Lemma~\ref{lm-dcp-raise}.
Finally, consider any strongly connected loopless digraph $\relstr G$
with algebraic length 1. Digraph $\relstr G$ is a homomorphic image of
a directed $2^p$-cycle for large enough $p$. Also $\relstr G$ is a
homomorphic image of a directed $c$-cycle where $c$ is odd and large
enough.  Therefore $\relstr G$ is a homomorphic image of
$\DCP(2^p, c)$ for large enough $p,c$, so the $\relstr G$ loop
condition is satisfied in $\var V$.

It remains to check the validity of our 7 lemmas.

Lemma~\ref{lm-start} is satisfied by Proposition~\ref{loop-prod-char}
since $\CLQP(1,0,3)$ is isomorphic to the undirected
triangle. Lemma~\ref{lm-clqp-raise} also
follows from an isomorphism of the appropriate digraphs. Sequence 
$(a_1, \None, a_2, \None, \ldots, a_k)$ in $\CLQP(k,0,s)$ can be
identified with the sequence $(x, a_1, x, a_2, \ldots, a_k, x)$ in
$\CLQP(k+1, s, 1)$ where $x$ is the only letter in the apropriate
alphabet.

Lemma~\ref{lm-clqp-cclw} is simple as well. the $\CCLW(k,l,c)$ loop
condition is implied by $\CLQP(k,l,1)$ just because 
$\CLQP(k,l,1)$ is a subdigraph of $\CCLW(k,l,c)$.

Lemmas~\ref{lm-clqp-reduce}, \ref{lm-cclw-reduce}, \ref{lm-cclw-dcp},
\ref{lm-dcp-raise} deserve bigger attention.

\subsection{Proof of Lemma~\ref{lm-clqp-reduce}}

\remindlemma{lm-clqp-reduce}

Assume that the $\CLQP(k,l,s)$ loop condition is satisfied in $\var V$. To
show that the $\CLQP(k,l-1,3s)$ loop condition is satisfied in $\var V$,
consider a digraph $\relstr G$ compatible with an algebra in $\var V$
such that $\CLQP(k, l-1, 3s)$ is a subdigraph of $\relstr G$. It
suffices to show that $\relstr G$ has to contain a loop.

Let us construct another digraph $\relstr H$, pp-constructed from
$\relstr G$. Nodes of $\relstr H$ are defined as the homomorphisms
$\CLQP(k,l-1,s)\to \relstr G$.

To define the edges of $\relstr H$, we investigate the digraph
$\CLQP(k,l,s)$ in more detail. Let $L_1, \ldots, L_l$ be the
loop symbols of $\CLQP(k,l,s)$ and $\{L_1, \ldots, L_{l-1}\}$ be
the loop symbols of $\CLQP(k,l-1,s)$. Then we can view
$\CLQP(k,l-1,s)$ as an induced subgraph of $\CLQP(k,l,s)$. There are no
extra edges since $k\geq2$. Let $\relstr V$ denote the digraph induced
on all nodes of $\CLQP(k,l,s)$ containing the symbol $L_l$. So the set
of all vertices is decomposed into $\CLQP(k,l-1,s)$ and $\relstr V$.

Consider $v_0,v_1\colon \CLQP(k,l-1,s)\to \relstr G$, two vertices of
$\relstr H$. By definition, there is an edge $v_0\redge H v_1$ if and
only if there is a digraph homomorphism $f\colon\relstr V\to\relstr G$
such that for any edge $(x,y)$ in $\CLQP(k,l,s)$:
\begin{itemize}
\item If $x\in\CLQP(k,l-1,s)$ and $y\in\relstr V$, then $v_0(x)\redge G f(y)$.
\item If $y\in\CLQP(k,l-1,s)$ and $x\in\relstr V$, then $v_1(y)\ledge G f(x)$.
\end{itemize}

\begin{claim}
  There is an undirected triangle in $\relstr H$.
\end{claim}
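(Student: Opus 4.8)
The plan is to build three vertices $h_0,h_1,h_2$ of $\relstr H$ together with, for each ordered pair $t\ne t'$, a homomorphism $f_{t,t'}\colon\relstr V\to\relstr G$ that witnesses the edge $h_t\redge H h_{t'}$; six such witnesses give all six directed edges on $\{h_0,h_1,h_2\}$, i.e. an undirected triangle. All the vertices come from the hypothesis that $\CLQP(k,l-1,3s)$ is a subdigraph of $\relstr G$. Split its $3s$-element letter alphabet into three $s$-element blocks $\alphabet A^{(0)}\sqcup\alphabet A^{(1)}\sqcup\alphabet A^{(2)}$ and fix bijections $\sigma_t$ from the $s$-element letter alphabet of $\CLQP(k,l-1,s)$ onto $\alphabet A^{(t)}$. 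Relabelling a node $(a_1,l_1,\dots,l_{k-1},a_k)$ by applying $\sigma_t$ to each letter and keeping the loop symbols is a homomorphism $\CLQP(k,l-1,s)\to\CLQP(k,l-1,3s)$ (validity of the image node and preservation of edges are immediate from injectivity of $\sigma_t$); composing with the subdigraph inclusion into $\relstr G$ gives $h_t$. The three $h_t$ are distinct since their images use disjoint blocks.

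For the witnesses, observe that a node $w=(a_1,l_1,\dots,a_k)$ of $\relstr V$ uses the symbol $L_l$ exactly once, at some position $i=i(w)$, so $a_i=a_{i+1}$; regard $w$ as a prefix $(a_1,\dots,a_i)$ and a suffix $(a_{i+1},\dots,a_k)$ glued at the repeated letter. Put $f_{t,t'}(w)$ equal to the node obtained by applying $\sigma_t$ to the prefix letters, $\sigma_{t'}$ to the suffix letters, leaving all loop symbols in place except the occurrence of $L_l$ at position $i$, which becomes $\None$. This is exactly where three disjoint blocks are needed: deleting $L_l$ forces the letters in positions $i$ and $i+1$ to differ, which now holds because they lie in the disjoint blocks $\alphabet A^{(t)}$ and $\alphabet A^{(t')}$, and no spurious letter repetition can be created across position $i$. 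A short case analysis on pairs of positions (both $\le i$: use injectivity of $\sigma_t$; both $\ge i+1$: injectivity of $\sigma_{t'}$; straddling $i$: disjoint blocks and a $\None$ loop symbol in between) shows $f_{t,t'}(w)$ is a legal node of $\CLQP(k,l-1,3s)$, and applying the same recipe to the length-$(k+1)$ node that witnesses an edge of $\relstr V$ shows $f_{t,t'}$ preserves edges.

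It remains to check the two crossing conditions defining $h_t\redge H h_{t'}$. For an edge $(x,y)$ of $\CLQP(k,l,s)$ with $x\in\CLQP(k,l-1,s)$ and $y\in\relstr V$, the witnessing length-$(k+1)$ node $n$ is forced to carry $L_l$ in its last loop-symbol slot, since the window $x$ avoids $L_l$ while $y$ contains it; relabelling $n$ by $\sigma_t$ on all letters but the last (which gets $\sigma_{t'}$) and changing that final $L_l$ to $\None$ gives a legal node of $\CLQP(k,l-1,3s)$ whose length-$k$ prefix is $h_t(x)$ and whose length-$k$ suffix is $f_{t,t'}(y)$, so $h_t(x)\redge G f_{t,t'}(y)$. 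Symmetrically, for $(x,y)$ with $x\in\relstr V$ and $y\in\CLQP(k,l-1,s)$ the symbol $L_l$ is forced to the first slot of the witnessing node, and the mirror-image relabelling gives $f_{t,t'}(x)\redge G h_{t'}(y)$; edges lying wholly inside $\CLQP(k,l-1,s)$ or wholly inside $\relstr V$ impose nothing new because $h_t$, $h_{t'}$ and $f_{t,t'}$ are homomorphisms. Thus $h_t\redge H h_{t'}$ for every $t\ne t'$ and $\{h_0,h_1,h_2\}$ is the desired triangle. I expect the only real obstacle to be the bookkeeping in the last two steps --- tracking indices while a length-$k$ or length-$(k+1)$ sequence is relabelled by two different injections on the two sides of the deleted loop symbol, and re-checking the ``letters repeat iff joined by loop symbols'' condition across that seam; conceptually everything is carried by the single remark that three pairwise-disjoint letter blocks let the three seams $01,12,20$ be opened simultaneously.
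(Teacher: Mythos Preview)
Your proof is correct and follows essentially the same approach as the paper: the paper also chooses three injections $u_1,u_2,u_3\colon\alphabet A_s\to\alphabet A_{3s}$ with pairwise disjoint images, takes the induced homomorphisms $\CLQP(k,l-1,s)\to\CLQP(k,l-1,3s)\subset\relstr G$ as the three vertices of the triangle, and witnesses the edge $u_1\redge H u_2$ by the very same map on $\relstr V$ that you call $f_{t,t'}$ --- replacing $L_l$ by $\None$ and relabelling the prefix and suffix letters by the two injections. The paper simply appeals to symmetry for the remaining five directed edges and omits the bookkeeping you carry out explicitly; your additional verification of the crossing conditions and of the node-validity across the seam is accurate but not conceptually different.
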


To show the claim, let $\alphabet A_s$, $\alphabet A_{3s}$ be alphabets of
$\CLQP(k,l-1,s)$, $\CLQP(k,l-1,3s)$ respectively. Fix three injective
mappings $u_1,u_2,u_3\colon \alphabet A_s\to\alphabet A_{3s}$ with pairwise disjoint
images. These mappings extend naturally to three homomorphisms
$\CLQP(k,l-1,s)\to\CLQP(k,l-1,3s)$, which are vertices of $\relstr H$
since $\CLQP(k,l-1,3s)$ is a subdigraph of $\relstr G$. We claim that
these three nodes are pairwise adjacent. By symmetry, it suffices to
show that there is an edge $(u_1,u_2)$ in $\relstr H$.
We define a homomorphism $f\colon \relstr V\to\CLQP(k,l-1,3s)$ as
follows. A general node of $\relstr V$ of the form
$$
(a_1, l_1, \ldots, a_{i}, l_i=L_l, a_{i+1}, l_{i+1}, \ldots, a_n)
$$
is mapped to the node
$$
(u_1(a_1), l_1, \ldots, u_1(a_{i}), \None, u_2(a_{i+1}), l_{i+1}, \ldots, u_2(a_n))
$$
Such $f$ is a digraph homomorphism $\relstr V\to\relstr G$ satisfying
the required properties, so there is an edge $u_1\redge H u_2$.

Since the variety $\var V$ is a loop-producing variety and digraph
$\relstr H$ contains an undirected triangle,
$\relstr H$ contains a loop by Proposition~\ref{loop-prod-char}. By
definition of $\relstr H$, such a loop
witnesses the existence of a homomorphism $\CLQP(k,l,s)\to G$. So by
the $\CLQP(k,l,s)$ loop condition, there is a loop in
$\relstr G$.

\subsection{Proof of Lemma~\ref{lm-cclw-reduce}}

\remindlemma{lm-cclw-reduce}

The approach is very similar to the proof of
Lemma~\ref{lm-clqp-reduce}. However, we will repeat the reasoning because
of the slight differences.

Assume that the $\CCLW(k,l,c)$ loop condition is satisfied in
$\var V$ and $\CCLW(k,l-1,c)$ is a subdigraph of a digraph $\relstr G$
compatible with an algebra in $\var V$. We want to find a loop in
$\relstr G$. We pp-construct a digraph $\relstr H$. Its nodes are all
the digraph homomorphisms $\CCLW(k,l-1,c)\to\relstr G$.

The edges of $\relstr H$ are defined analogously to the previous
case. We decompose the nodes of $\CCLW(k,l,c)$ into
two induced subraphs: $\CCLW(k,l-1,c)$ and $\relstr V$. There is an
edge $A\redge H B$ if there is a digraph homomorphism
$f\colon\relstr V\to\relstr G$ such that the pair of homomorphisms
$(A, f)$ preserves the edges from $\CCLW(k,l-1,c)$ to $\relstr V$ and
the pair of homomorphisms $(B, f)$ preserves the edges in the other
direction.

\begin{claim}
  Digraph $\relstr H$ contains an undirected cycle of length $c$.
\end{claim}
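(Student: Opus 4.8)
The plan is to produce the $c$-cycle explicitly from the rotational symmetry of the alphabet $\{0,1,\dots,c-1\}$ of $\CCLW(k,l-1,c)$. For $j\in\{0,\dots,c-1\}$ let $r_j$ be the map that adds $j$ modulo $c$ to every letter of a node and fixes all loop symbols. Since the conditions defining $\CCLW$ (non-repetition of loop symbols, and $a_i\equiv a_{i+1}\pm1$ resp.\ $a_i=a_{i+1}$ according to $l_i$) are invariant under a uniform shift of the letters, each $r_j$ is a digraph automorphism of $\CCLW(k,l-1,c)$; composing it with the inclusion $\CCLW(k,l-1,c)\hookrightarrow\relstr G$ gives a vertex of $\relstr H$. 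These $c$ vertices $r_0,\dots,r_{c-1}$ are pairwise distinct, so it suffices to prove $r_j\edge H r_{j+1}$ for every $j$ (indices mod $c$): that exhibits a homomorphism $\relstr C_c\to\relstr H$, i.e.\ an undirected $c$-cycle in $\relstr H$.

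The edge $r_j\redge H r_{j+1}$ requires a digraph homomorphism $f\colon\relstr V\to\relstr G$ for which $(r_j,f)$ preserves the edges of $\CCLW(k,l,c)$ running from the $\CCLW(k,l-1,c)$-part to $\relstr V$ and $(r_{j+1},f)$ preserves those running the other way. The construction exploits that every node of $\relstr V$ contains the symbol $L_l$ in a unique ``pivot'' slot $i$, where the definition forces $a_i=a_{i+1}$. Let $f$ shift by $j$ every letter up to and including position $i$, shift by $j+1$ every letter from position $i+1$ on, and replace $L_l$ by $\None$. Away from the pivot $f$ applies one fixed shift to each adjacent pair of letters, so all those local conditions and the loop-symbol non-repetition are preserved; at the pivot the two sides become $a_i+j$ and $a_{i+1}+(j+1)=a_i+j+1$, which differ by exactly $1$, matching the $\None$-condition. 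Hence $f$ maps $\relstr V$ into $\CCLW(k,l-1,c)\subseteq\relstr G$, and the same type of argument (shifting an auxiliary node of $\CCLW(k+1,l,c)$) shows $f$ is a digraph homomorphism.

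It remains to check that $f$ meshes with the rotations on the crossing edges, and this is the step I expect to be the most delicate. Using the description of edges of $\CCLW(k,l,c)$ via prefixes and suffixes of nodes of $\CCLW(k+1,l,c)$, a crossing edge from the $\CCLW(k,l-1,c)$-part to $\relstr V$ is witnessed by a node whose only $L_l$ sits in the last slot, so its prefix and suffix overlap entirely inside the ``shift-by-$j$'' region, where $r_j$ and $f$ agree; dually, a crossing edge from $\relstr V$ to the $\CCLW(k,l-1,c)$-part is witnessed by a node whose $L_l$ sits in the first slot, and there $r_{j+1}$ and $f$ agree on the overlap. In each case one forms the $\CCLW(k+1,l-1,c)$-node obtained from the witness by shifting its initial segment by $j$, its tail by $j+1$, and sending $L_l$ to $\None$, and reads off the required $\relstr G$-edge as a prefix--suffix pair. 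The reverse edge $r_{j+1}\redge H r_j$ follows verbatim from the homomorphism $f'$ that interchanges the two shift amounts, giving $r_j\edge H r_{j+1}$ and completing the Claim. The bookkeeping here is the cyclic-alphabet analogue of the three-disjoint-embeddings trick in the proof of Lemma~\ref{lm-clqp-reduce}: now the $c$ rotations chain up into a cycle, and the single slot with $a_i=a_{i+1}$ is exactly what lets a shift by $j$ and a shift by $j+1$ be glued into one homomorphism on $\relstr V$.

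After the Claim, the rest of the proof of Lemma~\ref{lm-cclw-reduce} is routine: as $c$ is odd, the undirected $c$-cycle is non-bipartite, so Proposition~\ref{loop-prod-char} (applied to $\relstr H$ and its compatible algebra in $\var V$) yields a loop in $\relstr H$; unwinding the definition of $\relstr H$, that loop is a homomorphism $\CCLW(k,l,c)\to\relstr G$, and the assumed $\CCLW(k,l,c)$ loop condition then produces a loop in $\relstr G$.
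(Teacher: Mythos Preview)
Your proof is correct and follows essentially the same approach as the paper: both take the $c$ rotations $r_j$ (the paper's $u_j$) of the alphabet as the vertices of the cycle in $\relstr H$, and both witness the edge $r_j\redge H r_{j+1}$ by the homomorphism $\relstr V\to\CCLW(k,l-1,c)$ that shifts letters left of the $L_l$ pivot by $j$, shifts letters right of it by $j+1$, and replaces $L_l$ by $\None$. Your write-up is more explicit about verifying edge preservation via the prefix--suffix description, but the underlying argument is identical.
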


Indeed, let $\alphabet A$ denote the set
$\{0, 1, \ldots, c-1\}$. Let $u_i\colon\alphabet A\to\alphabet A$
denote the mapping
$$
u_i(x) \equiv i+x \pmod c.
$$
Since these mappings are also endomorphisms of $\relstr C_n$,
these mappings can be extended to homomorphisms
$\CCLW(k,l-1,c)\to\CCLW(k,l-1,c)$. So we can view them as nodes
of $\relstr H$.

There are edges $u_x\redge H u_y$ for
$y=x\pm1\pmod c$. To verify it, we construct a
homomorphism $\relstr V\to \CCLW(k,l-1,c)$ testifying that. An element
of $\relstr V$
$$
(a_1, l_1, \ldots, a_{i}, l_i=L_l, a_{i+1}, l_{i+1}, \ldots, a_n),
$$
where $L_l$ denotes the one extra loop symbol, is mapped to
$$
(u_x(a_1), l_1, \ldots, u_x(a_{i}), l_i=\None, u_y(a_{i+1}), l_{i+1}, \ldots, u_y(a_n))
$$
in $\CCLW(k,l-1,c)$. Note that the $a_i=a_{i+1}$ because of the loop
symbol $L_i$, so $u_y(a_{i+1})\equiv u_x(a_{i})\pm 1$. Therefore the
position $i$ meets the requirement for $\CCLW(k,l-1,c)$.

Since there is a symmetric cycle $\relstr C_c$ in $\relstr H$ and
$\var V$ is a loop-producing variety, there is a loop in $\relstr H$
by Proposition~\ref{loop-prod-char}~(5). A loop in $\relstr H$
corresponds to a digraph homomorphism
$\CCLW(k,l,c)\to\relstr G$. Finally, the $\CCLW(k,l,c)$ loop condition
gives a loop in $\relstr G$.

\subsection{Proof of Lemma~\ref{lm-cclw-dcp}}

\remindlemma{lm-cclw-dcp}

Actually, this is true because there is a digraph homomorphism
$\CCLW(k,0,c)\to\DCP(2,c)$. To show it, let us reformulate the
existence of a homomorphism a bit.

Consider a digraph homomorphism
$a\colon\relstr Z\to\relstr C_c$ which contains
all the finite walks on $\relstr C_c$. This
is possible since there are just countable number of the finite walks
and $\relstr C_c$ is connected. Let us view the mapping $a\colon\relstr
Z\to\relstr C_c$ (and later defined $x\colon\relstr Z\to\DCP_c$ as
well) as a sequence instead of a function and write $a_i$ instead of
$a(i)$.

\begin{claim}
  There is a digraph homomorphism $x\colon\relstr Z\to\DCP(2,c)$ and a
  number $k\geq 1$ such that whenever
  $$
  (a_{i-k}, a_{i-k+1}, \ldots, a_{i+k}) = (a_{j-k}), a_{j-k+1}, \ldots, a_{j+k})
  $$
  for some integers $i,j$, then also $x_i = x_j$.
\end{claim}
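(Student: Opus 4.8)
The plan is to restate the claim as the construction of a single homomorphism between finite digraphs, and then build that homomorphism by hand, using that $c$ is odd.

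\textbf{Reformulation.} Finding $x$ and $k$ as in the statement is the same as exhibiting a digraph homomorphism $\Phi\colon\CCLW(2k+1,0,c)\to\DCP(2,c)$ for some $k\geq1$. Indeed, the nodes of $\CCLW(2k+1,0,c)$ are exactly the walks of length $2k$ on $\relstr C_c$, and its edges connect each such walk to its one-step shift (which overlaps it in $2k$ vertices); since $a$ contains every finite walk on $\relstr C_c$, each node occurs as a window $(a_{i-k},\ldots,a_{i+k})$ and each edge as a pair of consecutive windows of $a$. So a window-respecting $x$ defines $\Phi$ by $\Phi(a_{i-k},\ldots,a_{i+k}):=x_i$, while from any $\Phi$ one recovers a directed walk $x_i:=\Phi(a_{i-k},\ldots,a_{i+k})$ because consecutive windows of $a$ are $\CCLW(2k+1,0,c)$-adjacent. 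Thus the claim is equivalent to producing such a $\Phi$ — which is exactly what is needed for Lemma~\ref{lm-cclw-dcp} — and I now describe the walk $x$ directly.

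\textbf{Shape of the construction.} Realise $\DCP(2,c)$ as a directed $c$-cycle $B_0\to\cdots\to B_{c-1}\to B_0$ together with a directed $2$-cycle $A_0=B_0\leftrightarrow A_1$ glued at the one shared vertex; a directed walk on it runs around the $c$-cycle with occasional two-step detours $B_0\to A_1\to B_0$. Because $c\ge3$ every step of $a$ is unambiguously $+1$ or $-1$, so $a$ decomposes into maximal monotone runs of its integer lift $\tilde a$. I would make $x$ ``shadow'' $a$: on a monotone run, $x$ runs around the $c$-cycle in the positive direction, synchronised as $x_i=B_{a_i+\sigma}$ on an ascending run and $x_i=B_{-a_i+\sigma}$ on a descending run for a phase $\sigma$ in the cyclic group of order $c$; at each reversal of $a$ the phase is forced to jump by an even amount determined by the height of the reversal, and these jumps, as well as the wiggling portions of $a$, are absorbed by inserting detours. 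Each detour shifts $\sigma$ by $2$, and as $c$ is odd these shifts generate the whole group, so any phase is reachable; this lets one arrange, e.g., that a long alternating stretch $+1,-1,+1,-1,\dots$ of $a$ corresponds to $A_0,A_1,A_0,A_1,\dots$, and that deep inside a long monotone run $\sigma$ is driven to a canonical value, so that there $x_i$ depends only on $a_i$.

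\textbf{Main obstacle.} The real work is to make $x$ \emph{local}: to organise the detour schedule so that $x_i$ depends only on a bounded window of $a$, and then to choose $k$ past that bound. A priori the phase at time $i$ depends on the whole past; the point is that $\sigma$ lives in the finite cyclic group of order $c$, and a correction of size $2$ can be carried out as soon as $a$ revisits the vertex currently playing the role of $B_0$, which happens within $c$ steps of any sufficiently long run. So one must design the schedule so that no correction is ever postponed by more than a bounded (depending only on $c$) number of steps, after which the phase is pinned and readable locally — and one must verify that the resulting $x$ is a genuine directed walk at every reversal and every detour, where the relevant reversal identities for $\sigma$ come in, together with the parity fact that an odd-length closed walk on $\relstr C_c$ winds an odd number of times (which is why the odd $c$-cycle and the length-$2$ $A$-cycle between them suffice). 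Once the schedule is set up, reading $\Phi$ off from $x$ as above proves the claim, and hence Lemma~\ref{lm-cclw-dcp}.
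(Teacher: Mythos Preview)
Your reformulation is correct and matches how the paper uses the claim. But the construction itself is not actually carried out: the ``Main obstacle'' paragraph correctly names locality as the crux and then only says that ``one must design the schedule'' and ``one must verify'' the homomorphism property, without doing either. The phase-tracking idea is plausible but delicate --- you would need to specify exactly when and where detours are inserted, prove that the accumulated phase is periodically reset by a mechanism that itself depends only on a bounded window, and check every transition (into and out of alternating stretches, at reversals, during detours) is an honest edge of $\DCP(2,c)$. None of this is done, and the remark that a correction ``happens within $c$ steps of any sufficiently long run'' does not help when $a$ has no long monotone run nearby. As written this is a plan, not a proof.

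The paper's argument is much simpler and avoids phase-tracking entirely; it does not try to shadow $a$. It defines a locally computable set $S$ of anchor indices --- those $i$ with $a_i=0$, together with positions where $a_i$ is even and no zero occurs within distance $c-1$ --- and proves two combinatorial facts: consecutive anchors are at most $2c$ apart, and any two anchors closer than $c$ are an even distance apart. Then it sets $x_i=A_0=B_0$ at every anchor, fills even gaps by alternation $A_0,A_1,A_0,\ldots$, and fills odd gaps (necessarily of length $\geq c$, since $c$ is odd) by one lap $B_0,B_1,\ldots,B_{c-1}$ followed by alternation. Membership in $S$ depends on a window of radius $c-1$, and the fill between anchors depends only on the two nearest anchors (each within $2c$), so $k=3c$ suffices. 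The locality you flag as the hard part comes for free from this anchor construction.
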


First, let us see how the claim proves Lemma~\ref{lm-cclw-dcp}. The
value $x_i$ is determined only by the
$(2k+1)$-tuple $(a_{i-k}, \ldots, a_{i+k})$. By construction of $a$,
any such walk on $\relstr C_c$ of length $(2k+1)$ occurs among such
tuples. So there is a well defined mapping
$\CCLW(2k+1,0,c)\to\DCP(2,c)$ given by
$$
(a_{i-k}, \None, a_{i-k+1}, \None, \ldots, a_{i+k}) \mapsto x_i.
$$
Finally, this mapping is a homomorphism since the sequence $a$ covers
all the walks of length $(2k+2)$ corresponding to the edges of
$\CCLW(2k+1,0,c)$.

Now, we prove the claim. Let $S$ be the set of all the integers $i$
satisfying
\begin{enumerate}
\item $a_i = 0$ or
\item $a_i\equiv0\pmod 2$ and all the values
  $a_{i-c+1}, \ldots, a_{i+c-1}$ are nonzero.
\end{enumerate}
Observe that the difference between consecutive elements of $S$ can
not exceed $2c$. Indeed, whenever there are two zeros $a_i$, $a_j$ in
the sequence $a$, $i+2c<j$, and there are no zeros between them, there
is at least one integer in the middle assigned to $S$ by the rule 2.

On the other hand, whenever there are two integers in $S$ such that
their difference is less than $c$, then the difference is even.

Given these facts, we can define the sequence $x$. For every $i\in S$
we set $x_i = A_0 = B_0$, then we fill the even spaces by alternation
$$
A_0, A_1, A_0, A_1, \ldots, A_0
$$
and the odd spaces greater than $c$ by
$$
B_0, B_1, \ldots B_{c-1}, A_0, A_1, A_0, A_1, \ldots, A_0.
$$
Such a definition of $x_i$ depends only on
$S\cap\{i-2c+1,\ldots,i+2c-1\}$ and the definition of $S$ depends just
on neighborhoods with radius $c$. So $k=3c$ completes the claim.

\subsection{Proof of Lemma~\ref{lm-dcp-raise}}

\remindlemma{lm-dcp-raise}

Let the $\DCP(2^p,2^{p+1}+b)$ loop condition be satisfied in $\var V$ and
let $\relstr G$ be a digraph compatible with an algebra
$\alg A\in\var V$ having $\DCP(2^{p+1},b)$ as its subdigraph. We need to
show that $\relstr G$ contains a loop. 

Every node in $\DCP(2^{p+1},b)$ is contained in a directed cycle
of length $2^{p+1}+b$. Since this property is pp-definable, we can
assume without lost of generality that all nodes of $\relstr G$ are
contained in such a cycle.

On the same set of vertices, we define another digraph $\relstr H$
using second relational power of the edges of $\relstr G$, that is
$(x,y)$ is an $\relstr H$-edge if and only if there is a directed
$\relstr G$-walk of length 2 from $x$ to $y$. Since $\relstr H$ is
pp-defined from $\relstr G$, it is compatible with $\alg A$.

It is still true that every node
of $\relstr H$ is contained in a directed $\relstr H$-cycle of length
$2^{p+1}+b$. Moreover, the nodes
$A_0, A_2, A_4, \ldots, A_{2^{p+1}-2}$ form a cycle of length
$2^p$. Therefore, there is a homomorphism
$\DCP(2^p,2^{p+1}+b)\to\relstr H$. Using the assumption on $\var V$,
we get a loop in $\relstr H$.

Such a loop corresponds to an undirected edge in $\relstr G$. Any
directed cycle of an even length can be homomorphically mapped to such
an edge, in particular the cycle of length $2^p$. Since every node of
$\relstr G$ belongs to a cycle of length $2^{p+1}+b$, there is a
digraph homomorphism $\DCP(2^p,2^{p+1}+b)\to\relstr G$. Second
application of the loop condition finished the proof.

\section{Example loop-producing classes}
\label{example-classes}

In this section, we compare our result with standard classes
in universal algebra, and show which of them are loop-producing.

We start by providing an alternative proof of
Theorem~\ref{siggers-thm} that finite Taylor algebras have a Siggers
term. Our approach has a slightly weaker outcome than
Theorem~\ref{dir-loop-lemma} (finite directed loop lemma) since we
require strong conectedness. On the other hand, we managed to reduce
the necessity of finiteness to a single step in the proof.
That gives a hope for generalizations under weaker assumptions than
local finiteness.
One such generalization could be into the class of so called
oligomorphic structures, which are already known to satisfy their own
version of loop lemma, so called pseudoloop lemma. Details can be
found in~\cite{PseudoLoop}.

A variety $\var V$ is said to be \emph{locally finite}, if any finitely
generated $\alg A\in\var V$ is finite. Conversely, any variety
generated by a single finite algebra is locally finite, which explains
the equivalence between Theorem~\ref{siggers-thm} and the following
formulation.

\begin{theorem}
Let $\var V$ be a locally finite variety having a (not necessarilly
idempotent) Taylor term, that is, a $k$-ary term $t$ satisfying some $k$
identities of the form
\begin{align*}
t(x,?,?,...,?,?) &= t(y,?,?,...,?,?) = s_1(x,y),\cr
t(?,x,?,...,?,?) &= t(?,y,?,...,?,?) = s_2(x,y),\cr
&\kern 1.7mm\vdots\cr
t(?,?,?,...,?,x) &= t(?,?,?,...,?,y) = s_k(x,y).\cr
\end{align*}
where every question mark stands for either $x$ or $y$. Then $\var V$
is loop-producing.
\end{theorem}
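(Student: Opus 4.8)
The plan is to reduce this to the already-established machinery of loop conditions. The statement asserts that a locally finite variety with a (possibly non-idempotent) Taylor term is loop-producing; by Proposition~\ref{loop-prod-char} it suffices to produce a $6$-ary Siggers term, or equivalently (via Proposition~\ref{lc-basic-char}) to find, for a suitable finite algebra in $\var V$, a loop in a compatible graph containing an undirected triangle. So the first step is to set up the right finite algebra: let $\alg F$ be the $\var V$-free algebra on two generators $x,y$. By local finiteness $\alg F$ is finite. The $k$ Taylor identities give, for each coordinate $i$, a term $s_i$ with $s_i(x,y)=t(\ldots x \ldots)=t(\ldots y\ldots)$ where the two argument patterns differ only in coordinate $i$.

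Next I would build a compatible digraph on $\alg F$ and look for a triangle. The natural choice is the undirected graph $\relstr R$ generated (as a subalgebra of $\alg F^2$, symmetrized) by the two pairs $(x,y)$ and $(y,x)$ — the orbit of the edge between the two generators under all term operations. This is compatible with $\alg F$ by construction. The key point is to exhibit an undirected triangle in $\relstr R$, or more precisely any odd cycle, since Proposition~\ref{loop-prod-char}(5) lets us use odd symmetric cycles rather than triangles. Here is where the Taylor identities do the work: apply $t$ coordinatewise to the $k$ edges obtained by plugging the pair $(x,y)$ or $(y,x)$ into each coordinate according to the left-hand pattern of the $i$-th identity; the result is an edge of $\relstr R$ from $t(\text{left patterns}) $ to $t(\text{right patterns})$, but by the identities both endpoints collapse appropriately. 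A cleaner route: observe that the quotient of $\relstr R$ obtained by identifying vertices that cannot be separated by any homomorphism to a directed cycle has algebraic length one — the Taylor identities force every oriented cycle through the generators to have algebraic length not divisible by any $d>1$, essentially because in each coordinate the pattern is "balanced" between $x$ and $y$. Combined with weak connectedness of $\relstr R$ and the fact that $\relstr R$ is symmetric (hence smooth), one gets a strongly connected symmetric subgraph of algebraic length one sitting on the generators.

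Then the proof finishes by invoking the already-proved results of the paper. Once we have, inside a graph compatible with the finite algebra $\alg F\in\var V$, a strongly connected subconfiguration of algebraic length one containing the two generators in a suitably "non-bipartite" way, Theorem~\ref{main-thm} (or directly Corollary~\ref{loop-cond-eq}) applies — but note the subtlety that Theorem~\ref{main-thm} assumes $\var V$ is already loop-producing, so I cannot use it circularly. Instead the honest route is: use the classical directed loop lemma, Theorem~\ref{dir-loop-lemma}, which needs only a Taylor term and a finite weakly connected smooth digraph of algebraic length one. Feed it the symmetrized generated graph $\relstr R$ (which is finite, smooth because symmetric, and weakly connected); the remaining task is to verify $al(\relstr R)=1$. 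Granting that, $\relstr R$ has a loop, and the term generating that loop from the two starting edges is exactly a Siggers-type term, so $\var V$ is loop-producing.

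The main obstacle is establishing that the generated graph has algebraic length one. In the idempotent case this is the standard Siggers argument, but with a non-idempotent Taylor term the "$s_i$" auxiliary operations intervene, and one must check that the combinatorics of the argument patterns (each question mark being $x$ or $y$, with the $i$-th coordinate distinguishing the two sides) still forces a cycle of algebraic length one rather than merely a cycle of some even or other non-coprime length. I expect this to come down to a careful but elementary bookkeeping argument about oriented walks in $\relstr R$: any oriented cycle is a product (via term operations applied coordinatewise) of the basic edges $(x,y)$ and $(y,x)$, and the Taylor identities guarantee enough "short" oriented cycles of coprime algebraic lengths — in particular one of algebraic length one arising from gluing the two orientations of a single identity — so that the gcd is forced down to $1$. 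Once that lemma is in hand, the rest is assembly from Theorem~\ref{dir-loop-lemma} and Proposition~\ref{loop-prod-char}.
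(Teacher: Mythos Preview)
Your proposal has a genuine gap at exactly the point you flag as the main obstacle: the claim that the symmetric graph $\relstr R$ generated by $(x,y)$ and $(y,x)$ in the free algebra on \emph{two} generators has algebraic length~$1$ is false in general. Take $\var V$ to be the variety generated by the two--element algebra with the ternary majority operation $m$. This is a (locally finite) Taylor variety. Every binary term operation built from $m$ is a projection, so the free algebra on $\{x,y\}$ has exactly the two elements $x,y$, and $\relstr R$ is nothing but the single symmetric edge $x\edgev y$. That graph is bipartite, hence of algebraic length~$2$, contains no odd cycle, and has no loop. So no amount of bookkeeping will produce the cycle of algebraic length~$1$ you are hoping for, and the appeal to Theorem~\ref{dir-loop-lemma} cannot get off the ground. (If you move to three generators and the triangle, the graph is trivially non-bipartite and you recover the classical Siggers argument via the basic loop lemma---but that is precisely the proof the paper is trying to give an alternative to.)

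The paper's proof takes a quite different route. It works in the free algebra on $3k$ generators and the compatible graph containing $\relstr K_{3k}$, and shows \emph{directly from the Taylor identities} that any $c$-clique with $c\geq 3k$ can be enlarged to a $(c{+}1)$-clique: one fills a $(c{+}1)\times k$ matrix so that applying $t$ row-wise yields $c{+}1$ pairwise adjacent elements, using the $i$-th Taylor identity to arrange that column $i$ sees at most $2k{-}1$ distinct vertices among the first $2k$ rows, leaving enough unused vertices to fill the remaining rows. Iterating this growth past the (finite) size of the free algebra forces a loop by pigeonhole. No loop lemma---basic or directed---is invoked; finiteness is used exactly once, which is the point emphasized in the paper.
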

\begin{proof}
  We check that by verifying that $\var V$ satisfies the $\relstr
  K_{3k}$ loop condition. Let $\alg F$ be
  the free algebra generated by $3k$ elements, and $\relstr G$ be a
  graph compatible with it containing $3k$-clique as a homomorphic
  image. We are supposed to prove that $\relstr G$ contains a loop.

  To do that, it is sufficient to prove that whenever $\relstr G$
  contains a clique of size $c\geq 3k$ as a homomorphic image, it also
  contains a clique of size $c+1$. Therefore we will be able to
  gradually increase the size of the clique, and eventually exceed the
  size of algebra $\alg F$ which is finite since $\var V$ is a locally
  finite variety. The existence of a loop is then a direct
  consequence of the pigeonhole principle.

  So let us suppose that there is a clique with elements
  $A_i, B_i, C_i, D_j$ for $1 \leq i\leq k$ and $1\leq j\leq c-3k$.
  We are going to fill a matrix $a_{i,j}$ of size $c+1\times k$ in such
  a way that the Taylor term $t$ applied row-wise outputs the desired
  clique of size $c+1$.
  We write $a_{ii} = A_i, a_{ij} = B_i$ for $i\neq j$,
  $1\leq i,j\leq k$.
  So the values $t(a_{i,1},\ldots,a_{i,k})$ form a clique for
  $i\leq k$ since we used disjoint parts of the original clique
  in separate rows.
  Now, we fill in the next $k$ rows of the matrix.
  For a row $k+i$, where $1\leq i\leq k$, we use
  the elements $A_i,C_i$ in such a way that
  $$
  t(a_{k+i,1}, a_{k+i,2}, \ldots, a_{k+i,k}) = s_i(A_i, C_i)
  $$
  and moreover $a_{k+i,i} = A_i$. This is possible by the $i$-th
  Taylor identity. If we apply the Taylor term to the
  first $2k$ rows now, we still get a clique. In particular, there is
  an edge between
  $$
  t(a_{i,1}, a_{i,2}, \ldots, a_{i,k}) =
  t(B_i, \ldots, B_i, A_i=a_{i,i}, B_i, \ldots, B_i)
  $$
  and
  $$
  t(a_{k+i,1}, a_{k+i,2}, \ldots, a_{k+i,k}) = s_i(A_i, C_i)
  $$
  since there is a representation of
  $$
  s_i(A_i, C_i) = t(b_{i,1}, b_{i,2}, \ldots, b_{i,k})
  $$
  such that all $b_{i,j}$ are equal to $A_i$ or $C_i$ and moreover
  $b_{i,i}=C_i$.

  Since $a_{i,i}=a_{k+i,i} = A_i$, every column contains at most $2n-1$
  distinct vertices. Hence, we just complete the $c+1-2n$ remaining
  positions by the remaining $c-(2n-1)$ distinct vertices, and the Taylor term
  applied to rows gives a clique of size $c+1$. By this process, we
  gradually raise the size of the clique until we exceed the size of
  the algebra $\alg F$. Thus, we get a loop, and the variety is
  loop-producing.
\end{proof}

If we omit the assumption of local finiteness, we cannot hope for the
Siggers term even under the assumption of idempotency, as shown
in~\cite{Kazda}. This counterexample can be even extended to a
congruence meet-semidistributive variety that does not have a Siggers
term. That directs us to the varieties satisfying a non-trivial
congruence identity, equivalently; having a Hobby-McKenzie term. The
corresponding finite property is ``ommiting types (1), (5)''.

\begin{theorem}
Let $\var V$ be a variety satisfying a non-trivial congruence identity
(see~\cite{CongruenceLattices}). Then $\var V$ has a Siggers term.
\end{theorem}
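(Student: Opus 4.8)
The plan is to reduce this to the previous theorem about locally finite Taylor varieties by passing to a suitable locally finite subvariety. A variety $\var V$ with a non-trivial congruence identity has a Hobby--McKenzie term (by Kearnes--Kiss / the structure theory of congruence lattices cited in~\cite{CongruenceLattices}), and Hobby--McKenzie terms come in a specific strong shape. In particular, such a variety satisfies a \emph{system} of idempotent-like identities of bounded arity; the point is that these identities are ``finitary'' in the sense that they can be witnessed by finitely many equations among finitely many terms. First I would recall (or reprove) that having a Hobby--McKenzie term is equivalent to having a Taylor-like term $t$ of the form displayed in the previous theorem, except that one does not get $t(x,\ldots,x)=x$; this is exactly the (not-necessarily-idempotent) Taylor format used there.

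Second, I would use the fact that these identities only involve a bounded number of variables to find a finite algebra, or rather a locally finite subvariety, that still satisfies them. Concretely: let $\alg F$ be the $\var V$-free algebra on two generators $x,y$ (or on the finitely many generators appearing in the Taylor system), and let $t$ together with the auxiliary binary terms $s_1,\ldots,s_k$ witness the Taylor identities. Consider the subalgebra $\alg B\le\alg F$ generated by the (finitely many) elements obtained by evaluating $t$ and the $s_i$'s at the generators, and close off under the finitely many operations appearing in the identities --- the relevant observation is that the subvariety $\var W$ generated by $\alg F$ still has the Taylor term $t$, since $t$ and its defining identities are inherited by every subvariety. The subtlety is that $\var V$ need not be locally finite, so I cannot directly invoke the previous theorem on $\var V$ itself.

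Third --- and this is where the real work lies --- I would replace $\var V$ by a locally finite variety that still has a Taylor term. The clean way: the previous theorem's proof really only used that the free algebra $\alg F$ on $3k$ generators is finite, to run the clique-enlargement argument to a contradiction. So it suffices to exhibit, for each loop-producing test (each clique $\relstr K_{3k}$), a finite algebra having the Taylor term. Since the Taylor identities are a fixed finite set of equations, the variety $\var W$ axiomatized by \emph{just} those equations is locally finite (finitely many operation symbols of bounded arity, bounded equations --- free algebras on finite sets are finite), and $\var V\subseteq\var W$... except that is false in general since $\var V$ has other operations. The correct move is instead: take the \emph{clone} generated by $t,s_1,\ldots,s_k$ inside (the clone of) $\var V$, forming a reduct variety $\var V'\supseteq$-reduct; this reduct variety is generated by the identities among $t,s_i$ and is locally finite because we have finitely many basic operations satisfying the Taylor system, whose free algebras on finite sets are finite. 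Then apply the previous theorem to $\var V'$ to get a Siggers term built from $t,s_i$, hence a term of $\var V$.

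The main obstacle I anticipate is precisely the passage from the (possibly non-locally-finite) $\var V$ to a locally finite setting without losing the Taylor term: one must argue carefully that the reduct variety generated by the finitely many Taylor witnesses is locally finite, which requires that the identities satisfied by those operations are finitely generated --- true because a Hobby--McKenzie/Taylor term satisfies a \emph{finite} identity system, but it needs the explicit structural form of such terms from congruence-identity theory. Once that reduct variety is in hand, the previous theorem applies verbatim and produces the $6$-ary Siggers term $s(x,x,y,y,z,z)=s(y,z,z,x,x,y)$ as a composition of $t$ and the $s_i$, which is then a term of $\var V$.
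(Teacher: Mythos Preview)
Your reduction has a genuine gap at its core step: the claim that the reduct variety generated by the finitely many Taylor witnesses is locally finite is false. Having finitely many basic operations of bounded arity satisfying a finite set of identities does \emph{not} force free algebras on finite generating sets to be finite (think of a single binary operation with no identities, or of groups reduced to multiplication alone). Nothing in the Taylor or Hobby--McKenzie identity system bounds the size of the free algebras; those identities are satisfied in plenty of non--locally-finite varieties. So the passage ``this reduct variety \ldots\ is locally finite because we have finitely many basic operations satisfying the Taylor system'' does not go through, and without it you have no way to invoke the previous theorem.

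There is a sharper way to see the approach cannot work: your argument uses the congruence-identity hypothesis only to extract a Taylor term, and from that point on would apply to \emph{any} variety with a Taylor term. But the paper explicitly recalls Kazda's example of an (idempotent) Taylor variety with no Siggers term; if your reduction were valid it would prove Taylor $\Rightarrow$ Siggers in general, a contradiction. The congruence-identity hypothesis must therefore be used more substantially than just to produce Taylor terms. The paper does exactly that: it takes the concrete Hobby--McKenzie sequence $f_0,\ldots,f_{2m+1}$ (not merely an abstract Taylor term), works in the free algebra on a countably infinite generating set, and exploits the specific shape of the identities (ii)--(iv) together with two disjoint-image endomorphisms $\phi_1,\phi_2$ to build, by an inductive construction of elements $a_i,b_i,c_i,d_i$, an explicit loop in the clique graph. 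No finiteness is ever invoked; the infinite clique is used instead, via the observation that every element is adjacent to cofinitely many generators.
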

\begin{proof}
Among many characterization (see~\cite{CongruenceLattices}, Theorem A.2), we
use the following one (number (9) in the reffered theorem).
A variety $\var V$ has a Hobby-KcKenzie term if and only if it has
a sequence of terms $f_0, \ldots, f_{2m+1}$ such that
\begin{enumerate}[(i)]
\item $\var V\models f_0(x,y,u,v)\approx x$,
\item $\var V\models f_i(x,y,y,y)\approx f_{i+1}(x,y,y,y)$ for even $i$,
\item $\var V\models f_i(x,x,y,y)\approx f_{i+1}(x,x,y,y)$ and\\
      $\var V\models f_i(x,y,x,y)\approx f_{i+1}(x,y,x,y)$ for odd $i$,
\item $\var V\models f_{2m+1}(x,y,u,v)\approx v$.
\end{enumerate}
We can easily check that all the terms $f_i$ have to be idempotent,
that is $f_i(x,x,x,x) = x$ for any $i$ and $x$. We can assume that
all terms of $\var V$ are idempotent, otherwise we consider the
idempotent reduct.

Let $\alg F$ be a $\var V$-free algebra
generated by an infinite countable set $X$.
By Proposition \ref{loop-prod-char} (ii), it suffices to prove the
following. If $\relstr G=(A,G)$ is a graph, where the binary relation
$G$ is generated by distinct pairs of $X$, then $R$ contains a loop.
Since our
clique is infinite, and every element $x\in\alg F$ is generated by
finitely many elements of $X$, any such element is $\relstr G$-adjacent
to all but finitely many elements of $X$. Therefore, for any finite
set $x_1,x_2,\ldots,x_n\in\alg F$, we can find an element $y\in X$
such that $y\edgev  x_i$ for all $i=1,\ldots,n$.

Let $\hat\phi_1, \hat\phi_2\colon X\to X$ be two injective mappings with
disjoint images $X_1, X_2$, and let
$\phi_1, \phi_2\colon \alg F\to\alg F$ be their unique extensions to
endomorphisms of $\alg F$.

Since $\alg F$ is idempotent and the relation $G$ is full on
$X_1\times X_2$, we have an edge $\phi_1(x)\edgev  \phi_2(y)$ for
every pair $x,y\in\alg F$.
Since $\hat\phi_1, \hat\phi_2$ are injective, the mappings
$\hat\phi_1, \hat\phi_2$ are endomorphisms of the
relational structure $\relstr G|_{X}$. Therefore, even the induced
mappings $\phi_1,\phi_2$ are homomorphisms of the relational structure
$\relstr G$.

For $i=0,1,\ldots,m$,
we gradually construct sequences $\hat a_i, a_i, \hat b_i, b_i, \hat c_i,
c_i, d_i$ such that
$$
a_i \edgev  f_{2i}(x,a_i,a_i,a_i)
$$
for all $x\edgev  \hat a_i$.
We start with an arbitrary $\hat a_0 = a_0$, then $f_0(x,a_0,a_0,a_0) = x$
so the condition on $a_0$ is satisfied.
Now consider a general
$i\in\{0,1,\ldots,m\}$ and take a $\hat b_i\edgev  \hat a_i$. Then
$$
a_i \edgev 
f_{2i}(\hat b_i,a_i,a_i,a_i)=f_{2i+1}(\hat b_i,a_i,a_i,a_i)\defeq b_i.
$$
so $b_i\edgev  a_i$. We continue by taking an element $\hat c_i\in X$ such
that $\hat c_i\edgev  \hat b_i, a_i$. Then
$$
c_i\defeq f_{2(i+1)}(\hat c_i,b_i,\hat c_i,b_i)
= f_{2i+1}(\hat c_i,b_i,\hat c_i,b_i)
\edgev  f_{2i+1}(\hat b_i,a_i,a_i,a_i) = b_i
$$
and
$$
d_i\defeq f_{2(i+1)}(\hat c_i,\hat c_i,b_i,b_i)
= f_{2i+1}(\hat c_i,\hat c_i,b_i,b_i)
\edgev  f_{2i+1}(\hat b_i,a_i,a_i,a_i) = b_i
$$
Finally, consider an element $\hat a_{i+1}\in X$ such that
$\hat a_{i+1}\edgev  \phi_1(\hat c_i),\phi_2(\hat c_i)$, define
$$
a_{i+1} = f_{2(i+1)}(\hat a_{i+1}, \phi_1(c_i), \phi_2(d_i), \phi_2(d_i)),
$$
and check the edges $a_{i+1} \edgev  \phi_1(c_i), \phi_2(d_i)$:
\begin{align*}
f_{2(i+1)}(\hat a_{i+1}, \phi_1(c_i), \phi_2(d_i), \phi_2(d_i))
&\edgev  f_{2(i+1)}(\phi_1(\hat c_i), \phi_1(b_i), \phi_1(\hat c_i), \phi_1(b_i)),\cr
f_{2(i+1)}(\hat a_{i+1}, \phi_1(c_i), \phi_2(d_i), \phi_2(d_i))
&\edgev  f_{2(i+1)}(\phi_2(\hat c_i), \phi_2(\hat c_i), \phi_2(b_i), \phi_2(b_i)),\cr
\end{align*}
So $a_{i+1}\edgev  \phi_1(c_i)$ and
$a_{i+1}\edgev  \phi_2(d_i)$. Therefore for any $x\in X$ such that
$x\edgev  \hat a_{i+1}$, we have an edge
$$
a_{i+1} = f_{2(i+1)}(\hat a_{i+1}, \phi_1(c_i), \phi_2(d_i), \phi_2(d_i))
\edgev  f_{2(i+1)}(x, a_{i+1}, a_{i+1}, a_{i+1}),
$$
so we ensured the condition for $a_{i+1}$.
We ultimately get an edge
$$
a_m \edgev  f_{2m}(x, a_m, a_m, a_m) = f_{2m+1}(x, a_m, a_m, a_m) = a_m
$$
for some $x\edgev  \hat a_m$. This gives a loop on $a_m$ and finishes
the proof.
\end{proof}

\section{Cyclic terms}
\label{cyclic-terms}
        
For $n\geq 2$, an $n$-ary cyclic term $c_n$ is such a term that
satisfies
$$
c(x_1,x_2,x_3\ldots,x_n) = c(x_2,x_3,\ldots,x_n,x_1).
$$
The existence of an $n$-ary cyclic term is clearly a loop condition of
the digraph $\DC_n$. The importance of cyclic terms among loop
conditions lies in the following theorem.

\begin{theorem}
  \label{strong-con-char}
  Every non-trivial strongly conencted loop condition $C$ is either
  equivalent to the existence of a Siggers term,
  or to the existence of a certain cyclic term.
\end{theorem}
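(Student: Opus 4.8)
The plan is a dichotomy according to the algebraic length $d:=al(\relstr G_C)$, which is finite and at least $1$ since $\relstr G_C$ is strongly connected and, being the digraph of a non-trivial loop condition, contains an edge and hence a directed cycle. I claim that $C$ is equivalent to the Siggers term condition when $d=1$, and to the $d$-ary cyclic term condition (that is, the $\DC_d$ loop condition) when $d\ge 2$; the latter is the ``certain cyclic term''.

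For $d=1$ this follows from results already in hand: a variety satisfying the non-trivial condition $C$ is loop-producing by definition, hence has a Siggers term by Proposition~\ref{loop-prod-char}; conversely every loop-producing variety satisfies $C$ by Theorem~\ref{main-thm}. Now let $d\ge 2$. One implication is again immediate: since $al(\relstr G_C)=d$ there is a digraph homomorphism $\relstr G_C\to\DC_d$, so by Proposition~\ref{homo-implication} the $\relstr G_C$ loop condition implies the $\DC_d$ loop condition.

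The content is the converse. Suppose $\var V$ has a $d$-ary cyclic term; then $\var V$ is loop-producing (the $\DC_d$ condition is non-trivial), and via the homomorphisms $\DC_d\to\DC_e$ it has an $e$-ary cyclic term for every $e\mid d$. Let $\relstr G$ be compatible with some $\alg A\in\var V$ and admit a homomorphism from $\relstr G_C$; assume for contradiction $\relstr G$ is loopless. The key tool concerns relational powers: for a finite strongly connected digraph $\relstr H$ with $al(\relstr H)=e$, the $e$-th relational power $\relstr H^{(e)}$ --- same vertices, with $x\redgev y$ iff there is a directed $\relstr H$-walk of length $e$ from $x$ to $y$ --- splits as the disjoint union of the digraphs induced on the fibres of the canonical map $\relstr H\to\DC_e$, and each such fibre is strongly connected of algebraic length $1$ (strong connectedness is inherited, and by a Schur-type argument as in Section~\ref{preliminaries} the closed walks inside a fibre realise all sufficiently large lengths after dividing by $e$). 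With this, I descend on the algebraic length. Put $\relstr H_0$ for the strongly connected image of $\relstr G_C$ in $\relstr G$, so $e_0:=al(\relstr H_0)$ divides $d$; since $\relstr G$ is loopless and $\var V$ loop-producing, no strongly connected component of $\relstr G$ has algebraic length $1$ (by the corollary to Theorem~\ref{main-thm}), so $e_0\ge 2$. Given a strongly connected $\relstr H_i\subseteq\relstr G$ with $e_i:=al(\relstr H_i)$ dividing $d$ and $e_i\ge 2$: the digraph $\relstr H_i^{(e_i)}$ has a strongly connected component of algebraic length $1$, and since $\relstr H_i^{(e_i)}\subseteq\relstr G^{(e_i)}$ and $\relstr G^{(e_i)}$ is pp-definable from $\relstr G$ (hence compatible with $\alg A$), also $\relstr G^{(e_i)}$ has such a component, so $\relstr G^{(e_i)}$ has a loop; that loop is a closed directed $\relstr G$-walk of length $e_i$, which lies in a single strongly connected component $\relstr H_{i+1}$ of $\relstr G$, whose algebraic length $e_{i+1}$ divides $e_i$ (hence $d$) and is $\ge 2$. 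If $e_{i+1}<e_i$ I iterate; the process terminates, and once $e_{i+1}=e_i$ the closed walk of length $e_i$ decomposes into directed cycles with lengths divisible by $al(\relstr H_{i+1})=e_i$ summing to $e_i$, so it is a single directed $e_i$-cycle. Applying the ($e_i$-ary) cyclic term to its $e_i$ edges, as in the remark after Theorem~\ref{siggers-thm}, yields a loop in $\relstr G$ --- the desired contradiction.

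The main obstacle is exactly this converse for $d\ge 2$: there is in general no homomorphism $\DC_d\to\relstr G_C$, so Proposition~\ref{homo-implication} cannot be invoked directly and one must manufacture a directed cycle of length dividing $d$ inside a pp-constructed digraph. Pinning down the relational-power splitting, and the small but essential fact that a closed directed walk of length $e$ in a digraph of algebraic length exactly $e$ must be a single directed $e$-cycle, form the technical core; the descent is what handles a loop that only appears after passing to a relational power of $\relstr G$.
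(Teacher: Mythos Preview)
Your argument is correct, modulo one small slip: you state the relational-power splitting only for \emph{finite} strongly connected $\relstr H$, yet take $\relstr H_{i+1}$ to be a strongly connected component of $\relstr G$, which need not be finite. The fix is immediate --- either note that the splitting and the algebraic-length-$1$ claim go through for arbitrary strongly connected digraphs (the GCD of closed-directed-walk lengths at any vertex equals the algebraic length, by the same additive argument), or simply let $\relstr H_{i+1}$ be the finite image of the closed walk of length $e_i$ rather than the whole component.

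Your route for $d\ge 2$ is, however, considerably more involved than the paper's. The paper handles the converse $\DC_d\Rightarrow C$ entirely at the level of homomorphisms between the condition digraphs: since $\relstr G_C$ is strongly connected of algebraic length $d$, there is a homomorphism $\DC_{kd}\to\relstr G_C$ for all sufficiently large $k$ (by the proposition in Section~\ref{preliminaries}), hence $\DC_{d^e}\to\relstr G_C$ for some exponent $e$; combining this with $\DC_d\Leftrightarrow\DC_{d^e}$ from Corollary~\ref{radical-impl} and invoking Proposition~\ref{homo-implication} finishes. There is no descent, no further appeal to Theorem~\ref{main-thm}, and no need to extract a simple cycle from a closed walk. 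Your approach has the virtue of not quoting the $\DC_n\Rightarrow\DC_{n^2}$ step as a black box, but it essentially reproves a form of it inside the descent, at the cost of working in the (possibly infinite) target digraph $\relstr G$ rather than staying with the finite $\relstr G_C$.
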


Before we prove the theorem, we analyze what implications hold
between the loop conditions of cyclic terms. We can directly get the
following.
\begin{proposition}
  For $d,n\geq2$, where $d$ is a divisor of $n$, the following
  implications between loop conditions hold.
  \begin{enumerate}[(i)]
  \item $\DC_n \Rightarrow \DC_d$,
  \item $\DC_n \Rightarrow \DC_{n^2}$.
  \end{enumerate}
\end{proposition}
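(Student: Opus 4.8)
The two implications require different tools. Part~(i) is an instance of the homomorphism criterion of Proposition~\ref{homo-implication}, whereas part~(ii) cannot be, since $\DC_{n^2}$ is not a homomorphic image of $\DC_n$ (a homomorphism $\DC_n\to\DC_{n^2}$ would force $n^2\mid n$); instead I would compose the cyclic term with itself.

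For part~(i), the plan is to exhibit a digraph homomorphism $\DC_n\to\DC_d$. Since $d\mid n$, the map sending node $i$ of $\DC_n$ to node $i\bmod d$ of $\DC_d$ does the job: an edge $i\redgev i+1$ of $\DC_n$ is sent to $(i\bmod d)\redgev\bigl((i+1)\bmod d\bigr)$, and $(i+1)\bmod d\equiv(i\bmod d)+1\pmod d$, so it is an edge of $\DC_d$. Proposition~\ref{homo-implication} then gives $\DC_n\Rightarrow\DC_d$.

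For part~(ii), I would assume $\var V$ has an $n$-ary cyclic term $c$ witnessing the $\DC_n$ loop condition, and build an $n^2$-ary cyclic term $t$ by a self-composition. Index the $n^2$ variables by $\{0,\ldots,n^2-1\}$, writing each index uniquely as $qn+r$ with $0\le q,r\le n-1$, and set
$$
I_r=c\bigl(x_r,x_{n+r},x_{2n+r},\ldots,x_{(n-1)n+r}\bigr),
\qquad
t(x_0,\ldots,x_{n^2-1})=c\bigl(I_0,I_1,\ldots,I_{n-1}\bigr).
$$
To verify that $t$ is cyclic I would trace the shift $x_i\mapsto x_{i+1}$ (indices modulo $n^2$) through the inner terms: for $r\le n-2$ the shifted $r$-th inner term is literally $I_{r+1}$, while for $r=n-1$ it becomes $c\bigl(x_n,x_{2n},\ldots,x_{(n-1)n},x_0\bigr)$, which equals $I_0$ by the cyclicity of $c$. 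Hence the shift permutes $(I_0,\ldots,I_{n-1})$ cyclically, and one more use of cyclicity (of the outer occurrence of $c$) yields $t(x_1,\ldots,x_{n^2-1},x_0)=t(x_0,\ldots,x_{n^2-1})$.

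The only point needing care is the index bookkeeping modulo $n^2$: one must see that the ``carry'' produced by the shift in the base-$n$ expansion $i=qn+r$ is absorbed by exactly one application of the cyclic law for $c$ — which is also why part~(ii) needs no divisibility hypothesis and uses a single cyclic term twice. The same construction, with an $m$-ary and an $n$-ary cyclic term in the outer and inner roles, shows more generally that the conjunction of the $\DC_m$ and $\DC_n$ loop conditions implies the $\DC_{mn}$ loop condition, part~(ii) being the case $m=n$.
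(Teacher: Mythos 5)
Both halves of your argument are correct. Part~(i) is essentially the paper's proof verbatim: exhibit the reduction-mod-$d$ homomorphism $\DC_n\to\DC_d$ and invoke Proposition~\ref{homo-implication}. Your preliminary remark is also right that Proposition~\ref{homo-implication} cannot handle part~(ii), since a digraph homomorphism $\DC_n\to\DC_{n^2}$ would yield a closed directed walk of length $n$ in $\DC_{n^2}$, forcing $n^2\mid n$.

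For part~(ii) you take a genuinely different route from the paper. The paper stays on the relational side: given a compatible digraph $\relstr G$ containing $\DC_{n^2}$, it pp-defines $\relstr H$ whose edges are the directed $\relstr G$-walks of length $n$; then $\relstr H$ contains $\DC_n$, so the $\DC_n$ condition yields a loop in $\relstr H$, which is a directed $n$-cycle in $\relstr G$, and a second application of the condition gives a loop in $\relstr G$. You instead build an explicit $n^2$-ary cyclic term by substituting the $n$-ary cyclic term $c$ into itself, $t=c(I_0,\ldots,I_{n-1})$ with $I_r=c\bigl(x_r,x_{n+r},\ldots,x_{(n-1)n+r}\bigr)$, and verify cyclicity by chasing the index shift: for $r\le n-2$ the shifted $I_r$ is literally $I_{r+1}$, the wrap-around at $r=n-1$ is absorbed by one application of cyclicity of the inner $c$, and the induced cyclic permutation of $(I_0,\ldots,I_{n-1})$ is absorbed by cyclicity of the outer $c$. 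Both proofs amount to ``apply the cyclic term twice''; the paper's version leans on the loop-condition/compatibility machinery already in place and is shorter to state, while yours is purely syntactic and self-contained, and, as you note, generalizes at no cost to $\DC_m\wedge\DC_n\Rightarrow\DC_{mn}$ by using an $m$-ary cyclic term in the outer position and an $n$-ary one inside. (The paper obtains that more general consequence only indirectly, through Corollary~\ref{radical-impl}.)
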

\begin{proof}
  The implication (i) follows from the existence of a digraph
  homomorphism $\DC_n\to\DC_d$. For proving the implication (ii), we
  use a standard trick: Let $\relstr G = (A, G)$ be a digraph
  containing $\DC_{n^2}$ and compatible with an $n$-ary cyclic term $c_n$.
  We define a digraph $\relstr H$, where $x\redge H y$ if
  and only if there is a directed $\relstr G$-walk from $x$ to $y$ of
  length $n$. The digraph $\relstr H$ is pp-defined from $\relstr G$,
  so it is compatible with $c_n$ as well. Since $\relstr G$ contained
  $\DC_{n^2}$, the digraph $\relstr H$ contains $\DC_n$, and by
  compatibility with $c_n$, it has a loop. A loop in $\relstr H$
  corresponds to $\DC_n$ in $\relstr G$, so by second application of
  $c_n$, we get a loop in $\relstr G$. This finishes the proof.
\end{proof}

Just from these two facts, we get the following.
Let $\rad(n)$ denote the radical of an integer $n\geq2$, that is the
product of all distinct prime divisors of $n$.

\begin{corollary}
  \label{radical-impl}
  If $\rad(n_2)$ divides $\rad(n_1)$, then the $\DC_{n_1}$ loop
  condition implies the $\DC_{n_2}$ loop condition. In particular, all the
  loop conditions $\DC_n$ with a fixed radical of $n$ are equivalent.
\end{corollary}

Now, we are ready prove Theorem~\ref{strong-con-char}. Consider a
strongly connected loop condition $C$. Let $n = al(\relstr G_C)$
If $n=1$, then $C$ is
equivalent to the existence of a Siggers term
by Corollary~\ref{loop-cond-eq}. Otherwise, there is a digraph
homomorphism $\relstr G_C\to \DC_n$, and a digraph homomorphisms
$\DC_{kn}\to\relstr G_C$ for any $k$ that is large enough. So there is
a digraph homomorphism $\DC_{n^e}\to\relstr G_C$ for some exponent $e$,
and since the $\DC_n$ and the $\DC_{n^e}$ loop conditions are
equivalent, they are both equivalent to $C$.

We finish this section with an example showing that the implications
we have, are optimal.
\begin{theorem}
  Let $C_1, C_2$ be two non-trivial strongly connected loop
  conditions. Let
  numbers $n_1, n_2$ denote the algebraic lengths of
  $\relstr G_{C_1}$, $\relstr G_{C_2}$ respectively. Then $C_1$
  implies $C_2$ if and only if $\rad(n_2)$ divides $\rad(n_1)$.
\end{theorem}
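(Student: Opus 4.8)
The plan is to reduce both conditions to cyclic/Siggers conditions via Theorem~\ref{strong-con-char}, read off the ``if'' direction from Corollary~\ref{radical-impl}, and prove the ``only if'' direction contrapositively by producing an explicit counterexample variety built from the affine idempotent algebra over $\Z_p$. First I would record the reduction. Both $n_1,n_2$ are finite, since a finite strongly connected loopless digraph contains a directed cycle of length at least $2$, and that cycle, read as a walk, has algebraic length a positive multiple of the digraph's. By Theorem~\ref{strong-con-char} together with the equivalence of the $\DC_n$ and $\DC_{n^e}$ loop conditions, $C_i$ is equivalent to the existence of a Siggers term when $n_i=1$ and to the $\DC_{n_i}$ loop condition when $n_i\geq 2$. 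Moreover any variety satisfying a non-trivial loop condition is loop-producing, hence satisfies the Siggers condition; so the Siggers condition is implied by $C_1$ and by $C_2$ unconditionally.

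For the ``if'' direction, assume $\rad(n_2)\mid\rad(n_1)$. If $n_2=1$ then $C_2$ is the Siggers condition and we are done by the previous paragraph. If $n_2\geq2$, then $\rad(n_2)\geq2$ divides $\rad(n_1)$, so $n_1\geq2$ as well; then $C_1$ is the $\DC_{n_1}$ and $C_2$ the $\DC_{n_2}$ loop condition, and the implication is exactly Corollary~\ref{radical-impl}.

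The heart of the argument is the ``only if'' direction, which I would prove contrapositively. Assume $\rad(n_2)\nmid\rad(n_1)$ and fix a prime $p$ with $p\mid n_2$ and $p\nmid n_1$ (in particular $n_2\geq2$). Let $\alg A_p=(\Z_p;\,x-y+z)$ and let $\var V$ be the variety it generates. I claim three things. (1) $\var V$ is loop-producing: $x-y+z$ is a Mal'cev term, so $\var V$ is congruence permutable, hence congruence modular, hence satisfies a non-trivial congruence identity, so by the theorem in Section~\ref{example-classes} on varieties with a non-trivial congruence identity $\var V$ has a Siggers term (alternatively one writes down a symmetric $6$-ary term directly, $\frac{1}{6}(x_1+\dots+x_6)$ when $p\geq5$, checking $p\in\{2,3\}$ by hand). (2) $\var V$ satisfies $C_1$: it is a standard fact that the term operations of $\alg A_p$ are exactly the operations $\sum_i a_i x_i$ with $a_i\in\Z_p$ and $\sum_i a_i\equiv1\pmod p$, so when $n_1\geq2$ the operation $t(x_1,\dots,x_{n_1})=\frac{1}{n_1}(x_1+\dots+x_{n_1})$ (well defined since $p\nmid n_1$) is a term of $\alg A_p$, and it is symmetric, hence $n_1$-ary cyclic, so $\alg A_p$ satisfies the $\DC_{n_1}$ loop condition $\equiv C_1$; when $n_1=1$, $C_1$ is the Siggers condition, handled by (1). (3) $\var V$ does not satisfy $C_2$: the generator $x-y+z$ preserves the edge relation of $\DC_p$, because $(i+1)-(j+1)+(k+1)=(i-j+k)+1$, hence so do all term operations, so $\DC_p$ is compatible with $\alg A_p$; since $p\mid n_2$ there is a digraph homomorphism $\DC_{n_2}\to\DC_p$, and $\DC_p$ has no loop, so by Proposition~\ref{lc-basic-char}(3) the variety $\var V$ fails the $\DC_{n_2}$ loop condition, which equals $C_2$ as $n_2\geq2$. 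By (1)--(3), $\var V$ satisfies $C_1$ but not $C_2$, so $C_1$ does not imply $C_2$.

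I expect the only non-routine ingredient to be the identification of the clone of $\alg A_p$ with the idempotent affine operations over $\Z_p$, from which both the existence of the $n_1$-ary cyclic term (solvability of $n_1 a\equiv1$ modulo $p$) and the compatibility with $\DC_p$ fall out immediately; everything else is bookkeeping with the results recalled above, the one delicate point being to replace each $C_i$ by its cyclic/Siggers surrogate, and to keep track of the degenerate cases $n_i=1$, before invoking the divisibility criterion.
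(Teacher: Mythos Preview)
Your argument is correct and follows the paper's overall strategy: reduce to cyclic/Siggers via Theorem~\ref{strong-con-char}, use Corollary~\ref{radical-impl} for the forward direction, and for the converse exhibit the idempotent affine algebra over $\Z_p$ as a separating example. Two small differences from the paper are worth noting. First, the paper takes \emph{all} affine combinations as basic operations, while you generate the same clone from the single Mal'cev operation $x-y+z$; this is harmless since the clones coincide, and your presentation makes the compatibility check with $\DC_p$ a one-line computation. Second, and more interestingly, you establish the failure of $C_2$ structurally, by exhibiting the compatible loopless digraph $\DC_p$ and the homomorphism $\DC_{n_2}\to\DC_p$, then invoking Proposition~\ref{lc-basic-char}(3); the paper instead argues algebraically that any $n_2$-ary cyclic affine combination would force all coefficients equal and hence summing to $0$ rather than $1$. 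Both arguments are short, but yours avoids the explicit coefficient computation and ties the counterexample more directly to the graph-theoretic framework of the paper.
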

\begin{proof}
  If $\rad(n_2)$ divides $\rad(n_1)$, then either $n_2=1$, or
  $n_1,n_2\geq 2$. In the first case, $C_2$ is the weakest non-trivial
  loop condition, and therefore implied by $C_1$. In the second case,
  the conditions are equivalent to the existence of cyclic terms of
  arities $n_1, n_2$ respectively, and we get $C_1\Rightarrow C_2$
  from Corollary~\ref{radical-impl}.

  Now suppose that $\rad(n_2)$ does not divide $\rad(n_1)$.
  So there is a prime number $p$ that divides $n_2$ but not $n_1$.
  Let $A$ be the universe of the one-dimensional vector space over the
  $p$-element field. We equip the set $A$ with operations of the
  following form:
  For every $k$-tuple $\alpha_1,\ldots,\alpha_k$ such that
  $\alpha_1+\cdots+\alpha_k=1$, we consider the $k$-ary operation
  $A^k\to A$:
  $$
  (x_1,\ldots,x_k)\mapsto \alpha_1x_1+\cdots+\alpha_kx_k,
  $$
  that is, the affine combination given by coefficients
  $\alpha_1\ldots,\alpha_k$. We denote the final algebra
  $(A,\text{affine combinations})$ as $\alg A$. Note that all the term
  operations of $\alg A$ are still just some affine combinations.
  The algebra $\alg A$ satisfies $C_1$. If $n_1\geq 2$, it suffices to
  find a cyclic affine combination. There is the following one:
  $$
  c_1(x_1,\ldots,x_{n_1}) = \frac 1{n_1}\sum_{i=1}^{n_1}x_i.
  $$
  Note that $\frac 1{n_1}$ is well defined since $p$ does not divide $n_1$.
  If $n_1=1$, it suffices to find any satisfied loop condition.
  There is the Maltsev term $m(x,y,z)=x-y+z$ satisfying
  $m(y,x,x)=m(z,z,y)$.

  On the other hand, there is no $n_2$-ary cyclic term in $\alg A$.
  For a contradiction, consider such a term $c_2$:
  $$
  c_2(x_1,\ldots,x_{n_2}) = \sum_{i=1}^{n_2}a_ix_i.
  $$
  Let $0$ denote the zero vector in $\alg A$, and $1$ denote an
  arbitrary non-zero vector. Since $c_2$ is cyclic, all the values
  $$
  c_2(1,0,\ldots,0) = c_2(0,1,0,\ldots,0) = \cdots =
  c_2(0,\ldots,0,1)
  $$
  are equal. Therefore $a_1 = a_2 = \cdots = a_{n_2}$. But then
  $$
  \sum_{i=1}^{n_2}a_i = n_2a_1 = 0
  $$
  since $p$ divides $n_2$. This contradicts the fact that
  $c_2$ should be affine, and the sum is supposed to be equal to 1.
\end{proof}

\section{Without strong conectedness}
\label{no-strong-conn}

The case of loop conditions that are not strongly connected is largely
open. It seems that there is no further big collapse of loop
conditions up to equivalence. We provide two basic counterexamples
concerning loop conditions that are not strongly connected, and then
we provide a few of positive results.

\subsection{Counter-examples}

We have shown that all the non-trivial loop conditions corresponding to a
digraph with a strongly connected component of algebraic length one
are equivalent. In our first example we demonstrate that there
is no other loop condition equivalent to them.

\begin{example}
  There is an algebra $\alg A = (A, s)$ and a compatible digraph
  $\relstr G=(A, G)$ such that $s$ is an idempotent 6-ary Siggers
  operation, $\relstr G$ does not have a loop but any countable digraph
  without a strongly connected component with algebraic length 1 can
  be homomorphicly mapped into $\relstr G$. Therefore $\alg A$
  satisfies all the non-trivial loop conditions corresponding to a
  digraph with a strongly connected component with algebraic length
  one, but it satisfies no other non-trivial loop condition.
\end{example}
\begin{proof}
Let $\relstr G_0 = (A_0, G_0)$ be the disjoint union of directed cycles
of all lengths $l\geq 2$. First, we show that $\relstr G_0$ is
compatible with an idempotent algebra $\alg A_0 = (A_0, s_0)$, where
$s_0$ is a 6-ary Siggers operation.

We define $\sim$ as the smallest reflexive
symmetric binary relation on $A_0^6$ that satisfies
$$
(x,x,y,y,z,z)\sim(y,z,z,x,x,y)
$$
for any $x,y,z\in A_0$. Note that this relation is also
transitive: The only non-trivial way of applying transitivity is by
interpretting a single six-tuple as both $(x,x,y,y,z,z)$ and
$(y,z,z,x,x,y)$, but then $x=y=z$, and we generate just the
reflexivity on constant tuples.

Let $\phi\colon A_0\to A_0$ be the mapping that maps $x$ to $y$ such
that $x\redge {G_0} y$, and let $S$ be a factor set $A_0^6/\sim$. The
function $\phi$ is clearly compatible with $\sim$, so $\phi$ acts on
$S$ as well. In order to construct an idempotent Siggers operation, we
need to find a mapping $s_0\colon S\to A_0$ that maps constant
six-tuples to the appropriate elements and commutes with $\phi$. The
constant tuples has just one-element equivalent classes in $\sim$, and
they are closed under $\phi$, so we can handle them independently of
the rest, and simply map them to the appropriate elements.

Now, consider any other element $\alg x\in S$ represented by a six-tuple
$$
\bar x = (x_1,x_2,x_3,x_4,x_5,x_6)\in A_0^6.
$$
The element $\alg x$ is
in a $\phi$-cycle of a finite length $l$ that is less than or equal to
the product of the lengths of the cycles containing $x_1,\ldots,x_6$. If
$l\geq2$, we can map that cycle into $\relstr G$. It remains to check
that the possibility $l=1$ cannot happen.
For a contradiction, assume that $\phi(\mathbf{x}) = \mathbf{x}$, that
is, $\phi(\bar x) \sim \bar x$. Since $\phi(x_1)\neq x_1$, also
$\phi(\bar x)\neq\bar x$. Therefore there are $x,y,z\in A_0$ such that
$$
\bar x = (x,x,y,y,z,z)\text{ and }\psi(\bar x) = (y,z,z,x,x,y),
$$
where $\psi$ stands for either $\phi$ of $\phi^{-1}$.
Then we have $y=\psi(x)=z$ and $z=\psi(y)=x$, and we get a
contradiction with the assumption that $\bar x$ is non-constant.

Now, we construct the promised example. Take the algebra
$\alg A_1 = (\Q, s)$, where $\Q$ denotes the set of all rational
numbers, and $s_1$ is the arithmetical mean of six elements. Note that
$s_1$ satisfies the Siggers equation as well. We set
$\alg A = \alg A_0 \times \alg A_1 = (A, s)$. The digraph
$\relstr G=(A,G)$ is defined as follows:
$$
(x_1,q_1)\redge G(x_2,q_2)\buildrel\rm def\over\Leftrightarrow
q_1<q_2\text{ or }(q_1=q_2\text{ and }(x_1,x_2)\in G_0).
$$
Now, consider six edges $((x_i,p_i),(y_i,q_i))\in G$ for
$i=1,2,3,4,5,6$. Let $((x,p),(y,q))$ be the result of applying $s$ to
them. We have to check that the resulting edge is in $G$ as well.
We distinguish two cases. If $p_i < q_i$ for some $i$, then $p<q$, so
$(x,p)\redge G (y,q)$. Otherwise for all $i$, $p_i=q_i$ and
$x_i\redge{G_0} y_i$. In that case $p=q$ and $x\redge{G_0} y$, so
$(x,p)\redge G (y,q)$.

Finally, we observe that any countable digraph $\relstr H$ having no
strongly connected component with algebraic length 1 can be
homomorphically mapped to $\relstr G$. Modulo the strong conectedness,
the digraph $\relstr H$ has no loop, so it can be mapped into
$(\Q, <)$, since $(\mathbb{Q}, <)$ contains all strict partial orders
as subdigrahs. Finally, every strongly connected component of
$\relstr H$ can be mapped into $G_0$ since it has algebraic length
bigger than one. These two mappings together form the digraph
homomorphism $\relstr H\to\relstr G$.
\end{proof}

Our second example shows that if we do not require at least
some cycles in the digraph, we cannot get a loop even under quite strong
assumptions. In particular, our example is locally finite, has a
ternary near unanimity operation, and the compatible digraph forms an
$\omega$-categorical structure.

\begin{example}
  Let $\alg A=(\{0,1\}, m)$ be the two-element algebra, where $m$ is
  the majority operation
  $$
  m(x,x,y)=m(x,y,x)=m(y,x,x)=x.
  $$
  There is a smooth loopless digraph $\alg G$ compatible with a subalgebra
  $\alg B\leq \alg A^\omega$, such that $\relstr G$ contains as subdigraphs all
  countable digraphs without directed cycles.
\end{example}
\begin{proof}
  Regard elements of $\alg A^\omega$ as
  infinite sequences indexed by rational numbers, that is functions
  $a\colon\Q\to\{0,1\}$. For $q\in\Q$ define $b_q\in\alg A^\omega$ as
  follows
  $$
  b_q(x) =
  \begin{cases}
    0 & \text{if }x\leq q,\cr
    1 & \text{if }x > q.\cr
  \end{cases}
  $$
  Then if $q_1,q_2,q_3\in\Q$, the value $m(b_{q_1},b_{q_2},b_{q_3})$
  equals the function $b_q$, where $q$ is the median element of the
  triple $(q_1,q_2,q_3)$. Therefore, the set $B=\{b_q:q\in\Q\}$ is a
  subuniverse, and we consider the subalgebra $\alg B=(B,m)$.

  The digraph $\relstr G=(B,G)$ is constructed by putting
  $(b_{q_1},  b_{q_2})\in G$ if and only if $q_1 < q_2$. This digraph
  contains all countable digraphs without directed cycles since any
  countable strict partial order is a subdigraph of $(\Q, <)$.

  It is not difficult to check that $\relstr G$ is compatible with the
  operation $m$, that is that the strict order is compatible with
  median operation. Let $m'$ denote the median of three rational
  numbers. Consider three pairs $p_1<q_1$, $p_2<q_2$, $p_3<q_3$, we
  check $m'(p_1,p_2,p_3) < m'(q_1,q_2,q_3)$. Without loss of generality,
  $p_1\leq p_2\leq p_3$, so $m'(p_1,p_2,p_3) = p_2$. Both $q_2$
  and $q_3$ are strictly greater than $p_2$, therefore
  $m'(q_1,q_2,q_3) > p_2 = m'(p_1, p_2, p_3)$.
\end{proof}

\subsection{Partial positive results}

Here we show a few loop conditions that are not strongly connected,
and are satisfied by certain algebras. We begin with the definitions.

A \emph{Maltsev term} is an idempotent ternary term $m$ satisfying the
equation $m(y,x,x) = m(z,z,y)$. By a \emph{non-idempotent Maltsev
term} we mean a term $m$ satisfying this equation but not being
necessarily idempotent. The existence of a Maltsev term is a loop
condition given by the graph $x\to y\to z\leftarrow x$.

A \emph{near unanimity term} (briefly \emph{NU term}) is an $n$-ary term $t$ such
that $t(x,\ldots,x,y,x,\ldots,x)=x$ for any position of $y$. Here, we
will be primarily interested in ternary NU term, that is, a term $t$
such that $t(x,x,y)=t(x,y,x)=t(y,x,x)=x$.

By a \emph{cone}, we mean the digraph given by
$c_1\redgev\cdots\redgev c_n\to c_1$, where $n\geq 2$,
$a\redgev c_i$ for any $1\leq i\leq n$, and $b\redgev a$.

\begin{proposition}
  Non-idempotent Maltsev term implies any weakly connected loop
  condition with algebraic length 1.
\end{proposition}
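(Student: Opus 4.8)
The goal is to show that a non-idempotent Maltsev term $m$ (satisfying $m(y,x,x)=m(z,z,y)$, no idempotency assumed) suffices to make any weakly connected digraph of algebraic length $1$ produce a loop. By Proposition~\ref{lc-basic-char}, item (iii), it is enough to take an arbitrary digraph $\relstr G$ compatible with an algebra $\alg A$ possessing such an $m$, together with a weakly connected digraph $\relstr K$ of algebraic length $1$ and a homomorphism $\relstr K\to\relstr G$, and to find a loop in $\relstr G$. Since every finite weakly connected digraph of algebraic length $1$ admits a homomorphism from $\Z$ (the two-way infinite directed path) restricted to a long enough segment — more precisely, because $\relstr K$ has algebraic length $1$ there is an oriented $\relstr K$-walk of net length $1$ from a vertex to itself after passing through all of $\relstr K$, one can realize $\relstr K$ as a homomorphic image of a directed path that ``wraps around'' — the core reduction is to the $\Z$-segment loop condition. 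In fact the cleanest route is: it suffices to prove that $\relstr G$ contains two vertices $p,q$ with $p\redgev q$ and also an oriented walk from $q$ back to $p$ of algebraic length $0$ forces nothing, so instead I would directly use that algebraic length $1$ gives a closed oriented walk $w$ in $\relstr G$ (the image of a cycle in $\relstr K$) with exactly one more forward than backward edge.

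\emph{Main construction.} Write the closed oriented walk as a sequence of vertices $v_0, v_1, \ldots, v_N = v_0$ where each consecutive pair is a forward or backward edge, and the number of forward edges is one more than the number of backward edges. The trick with a Maltsev-type term is to ``straighten'' such a walk into a loop by applying $m$ coordinatewise to three translated copies of the walk, exploiting the identity $m(y,x,x)=m(z,z,y)$ to cancel a forward step against a backward step. Concretely, I would build three walks of the same combinatorial shape obtained by sliding the base point, so that at each index $i$ the triple fed into $m$ is an edge of $\relstr G$ (since $\relstr G$ is compatible with $m$, applying $m$ to three $\relstr G$-edges yields a $\relstr G$-edge), and arrange the alignment so that the resulting walk has all its backward edges matched and annihilated via the Maltsev identity, leaving a directed closed walk of length $1$ — that is, a loop. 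This is exactly the mechanism by which $m(y,x,x)=m(z,z,y)$ encodes ``a forward edge is interchangeable with a backward edge'': the pattern $y,x,x$ versus $z,z,y$ has the $y$ move from the last slot to the first, reversing direction.

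\emph{Where the work is.} The main obstacle is the bookkeeping that makes the three translated copies fit together: one must choose offsets so that every coordinate triple is genuinely a $\relstr G$-edge and so that after applying $m$ the net algebraic length drops from $1$ to $0$ on every backward edge while preserving exactly one forward edge overall, i.e. collapsing the whole thing to a vertex with a self-loop. I expect to handle this by an induction on the number of backward edges in the walk: the base case (a directed cycle, algebraic length equal to its length, but here we want length $1$, so actually a directed self-loop) is immediate, and the inductive step removes one backward edge by a single application of $m$ to the walk, a shifted-by-one copy, and a constant copy, precisely mirroring the three arguments of the Maltsev identity. Non-idempotency costs us nothing here because we never need $m(a,a,a)=a$; we only need the two-variable identity, which is what distinguishes this from results requiring idempotent terms. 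Once the backward edges are exhausted we are left with a directed closed walk of length $1$, which is a loop in $\relstr G$, and Proposition~\ref{lc-basic-char} then transfers this back to satisfaction of the given loop condition in $\var V$.
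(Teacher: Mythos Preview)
Your setup is fine: reduce to a closed oriented walk $v_0,\ldots,v_N=v_0$ in $\relstr G$ with one more forward than backward edge, and induct. The gap is in the inductive step. Applying $m$ coordinatewise to three walks only produces a walk when all three have the \emph{same} direction pattern (compatibility gives $m(u_1,u_2,u_3)\redgev m(w_1,w_2,w_3)$ from three forward edges, and likewise for backward, but nothing from a mixture). A ``shifted-by-one copy'' has a different direction pattern from the original, so the coordinatewise image is not a walk. A ``constant copy'' would be a walk sitting at one vertex, which requires a loop you do not yet have. And even if you feed $m$ three walks with the same pattern, the output has that same pattern, so the number of backward edges does not drop. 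The sentence ``backward edges matched and annihilated via the Maltsev identity'' is where the actual mechanism is missing: the identity $m(y,x,x)=m(z,z,y)$ equates two \emph{elements}, it does not by itself delete an edge from a walk, and you have not specified which triples of vertices are being plugged in or why the resulting equalities shorten anything.

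The paper's proof does induct on the length of the oriented cycle, but the shortening happens in a pp-defined auxiliary digraph $\relstr H$, not by massaging the walk with $m$. There are two cases. If the cycle has a zig-zag $a_i\redgev a_{i+1}\ledgev a_{i+2}\redgev a_{i+3}$ (or its reverse), set $y_1\redge H y_2$ iff $\exists x,z\,(y_1\redgev z\ledgev x\redgev y_2)$; then $H\supset G$ and $a_i\redge H a_{i+3}$, so the cycle shortens by two in $\relstr H$. If there is no zig-zag, every maximal run of arrows has length $\geq 2$, and one sets $x_1\redge H x_2$ iff $\exists y,z\,(x_1\redgev y\redgev z\ledgev x_2)$; this lets you excise each ``peak'' $\redgev\redgev\ledgev\ledgev$ and again shorten by two. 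By the induction hypothesis $\relstr H$ has a loop, and in either case a loop in $\relstr H$ unfolds to a transitive triple $x\redgev y\redgev z$, $x\redgev z$ in $\relstr G$, which is exactly the Maltsev digraph; one final application of the Maltsev loop condition then gives a loop in $\relstr G$. The point you are missing is that the Maltsev term is applied only once, at the very end, to a three-vertex configuration; the shortening is done relationally via pp-definitions rather than by pointwise application of $m$ to long walks.
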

\begin{proof}
  Consider a digraph $\relstr G=(A,G)$ having a weakly connected
  component of algebraic length 1,
  and compatible with an algebra $\alg A$ with a Maltsev term
  $m$. Since
  $\relstr G$ has algebraic length 1, there is an oriented cycle
  $a_0,a_1,\ldots,a_{n-1},a_0$ such that $a_i\redge G a_{i+1}$ or
  $a_i\ledge G a_{i+1}$ for every $i=0.\ldots,n-1$ (we calculate
  indices modulo $n$ in this proof), and the number or right
  arrows exceeds the number of left arrows by one.
  We prove our proposition by induction on
  $n$. If $n=1$, we have a loop.

  If $n>1$, we distinguish two cases.
  \begin{enumerate}[(a)]
  \item There is a zig-zag pattern in the cycle
    $a_i\redge G a_{i+1}\ledge G a_{i+2}\redge G a_{i+3}$, or
    $a_i\ledge G a_{i+1}\redge G a_{i+2}\ledge G a_{i+3}$.
  \item There is no zig-zag pattern in the cycle, that is, each arrow
    is next to an arrow of the same direction.
  \end{enumerate}
  If there is a zig-zag pattern, we pp-define a digraph $\relstr H=(A,H)$
  by $y_1\redge H y_2$ if there are $x,z$ such that
  $y_1\redge G z\ledge G x\redge G y_2$. Notice that $H\supset G$, and
  $\relstr H$ builds a ``bridge'' $a_i\redge H a_{i+3}$, or
  $a_{i+3}\redge H a_i$ to the zig-zag pattern. So if we omit
  $a_{i+1},a_{i+2}$, the cycle has
  still algebraic length one in $\relstr H$. By induction hypothesis,
  Maltsev term imply the loop condition of that shorter cycle, so
  $\relstr H$ has a loop. A loop in $\relstr H$ corresponds to
  $x\redge G y\redge G z, x\redge G z$, which is the digraph of the
  non-idempotent maltsev term. Therefore $\relstr G$ has a loop.

  If there is no zig-zag pattern in the cycle, we use the following
  pp-definition. $x_1\redge H x_2$ if there are $y,z$ such that
  $x_1\redge G y\redge G z\ledge G x_2$. Now, we have $x_1\redge H x_2$ whenever
  $x_1\redge G x_2$ and $x_2$ has an outcoming
  edge. Nevertheless, we can omit all the peaks from our
  cycle. Whenever we have
  $$a_i\redge G a_{i+1}\redge G a_{i+2}\ledge G a_{i+3}\ledge G a_{i+4},$$
  we can discard $a_{i+2}$ since $a_i\redge H a_{i+3}$. Therefore we
  get a shorter cycle in $\relstr H$ following a loop in $\relstr H$.
  As in the previous case, a loop in $\relstr H$ corresponds to a
  digraph $x\redge G y\redge G z, x\redge G z$ that implies a loop using
  the Maltsev term.
\end{proof}

\begin{proposition}
  Ternary near unanimity term implies the ``cone'' loop condition.
\end{proposition}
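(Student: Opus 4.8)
The plan is to verify the cone loop condition through item~(3) of Proposition~\ref{lc-basic-char}. So fix an algebra $\alg A$ with a ternary near unanimity term $f$, a digraph $\relstr G=(A,G)$ compatible with $\alg A$, and a homomorphism from the cone into $\relstr G$; writing $c_1,\dots,c_n,a,b\in A$ for the images of the cone's vertices we then have $c_i\redge G c_{i+1}$ for every $i$ (indices mod $n$), $a\redge G c_i$ for every $i$, and $b\redge G a$, and the task is to produce a loop in $\relstr G$. I would use repeatedly that $f$ is idempotent and that every singleton $\{x\}\subseteq A$ absorbs $\alg A$ with respect to $f$ — equivalently, that every relation compatible with $\alg A$ is $2$-decomposable, i.e.\ determined by its binary projections.

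The heart of the proof should be a pp-construction that turns the cone of cycle length $n$ either directly into a loop of $\relstr G$, or into the cone of a shorter cycle length inside an auxiliary digraph $\relstr H$ that is again compatible with $\alg A$. A first candidate is $\relstr H=(A,H)$ with $x\redge H y$ whenever $x\redge G y$, or $x\redge G z\redge G y$ for some $z$: then $\relstr H\supseteq\relstr G$ and, taking a single ``$2$-step shortcut'' along the $n$-cycle while leaving the edges $a\redge G c_i$ and $b\redge G a$ in place, one gets a homomorphism from the cone of cycle length $n-1$ into $\relstr H$; an induction on $n$ (with a direct argument at the smallest cycle length, where a mere $2$-cycle $c_1\redgev c_2\redgev c_1$ must be collapsed) would then finish the proof. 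The subtle point is that a loop of this $\relstr H$ pulls back only to a bounded directed closed walk of $\relstr G$, not at once to a loop, so the construction has to be refined — e.g.\ routed through a pp-power on $A^2$ one of whose coordinates records a genuine $\relstr G$-edge (so that $\relstr H$-loops do pull back to $\relstr G$-loops), or else the shorter cone condition has to be applied a second time to the configuration witnessed by the $\relstr H$-loop, in the spirit of the proof of Lemma~\ref{lm-dcp-raise}.

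I expect the main obstacle to be exactly this: near unanimity is a weak condition here, and the edge $b\redgev a$ must be used in an essential way. Deleting $b$ leaves the ``fan'' — a directed $n$-cycle together with an apex dominating every cycle vertex — and the fan admits a ternary near unanimity polymorphism (for $n=2$, the operation that is majority on $\{c_1,c_2\}$ and maps every triple containing $a$ together with both cycle vertices to $a$ is one), so the fan loop condition is \emph{not} implied by a ternary near unanimity term. Consequently no argument that uses only ``a dominated directed cycle'' can work; the whole difficulty is to exploit that in the cone the apex $a$ itself has an in-neighbour, and to make the near unanimity identities interact with this one extra edge — which is moreover fragile under the usual relational-power constructions — so as to force a loop.
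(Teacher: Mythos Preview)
Your proposal is a plan with a gap you yourself identify, and neither of your two suggested repairs is actually carried out. The difficulty is real: with $\relstr H$ defined by adding length-$2$ walks to $\relstr G$, a loop in $\relstr H$ gives at best a directed $2$-cycle $p\leftrightarrow q$ in $\relstr G$, and then you are stuck. Your ``apply the shorter condition again'' idea, modelled on Lemma~\ref{lm-dcp-raise}, would require a cone of some length $<n$ to appear in $\relstr G$ around that $2$-cycle --- but the $2$-cycle comes with no apex and no edge into an apex, and you have no mechanism to manufacture one (the original cone may live in an unrelated part of $\relstr G$). The ``pp-power on $A^2$'' idea runs into the fact that in the cone the vertex $b$ has no in-edge, so it cannot be lifted to a pair recording a genuine $\relstr G$-edge; your own fan example shows that losing $b$ is fatal. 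I do not see how to complete the induction along these lines.

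The paper avoids all of this by a direct construction with no auxiliary digraph and no induction on the cycle length. Writing $m$ for the majority term, one sets $x_n=c_n$ and $x_i=m(x_{i+1},c_i,a)$ for $i=n-1,\dots,1$, and verifies by downward induction the three invariants $a\redgev x_i$, $x_i\redgev c_1$, and $x_i\redgev x_{i+1}$. The fragile edge $b\redgev a$ is re-used at every step through the majority identities $a=m(a,a,b)$ and $x_{i-1}=m(x_{i-1},x_{i-1},b)$; it never has to be transported through a relational power. The loop is then $x_1=m(x_1,x_1,b)\redgev m(x_2,c_1,a)=x_1$. Your diagnosis that everything hinges on $b\redgev a$ is exactly right; the paper's trick is to keep returning to it via $m(\cdot,\cdot,b)$ rather than trying to preserve it under a pp-construction.
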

\begin{proof}
  Let $\alg A=(A, t)$ be an algebra where $t$ is a ternary near
  unanimity term, and assume that there is a graph (denoted by
  arrows) compatible with $\alg A$. 
  Moreover assume that a cone $a,b,c_1,\ldots,c_n$ is a subgraph of
  the graph. We need to prove that the graph contains a loop.

  We set $x_n = c_n$ and
  recursively construct elements
  $x_{n-1}, x_{n-2}, \ldots, x_1\in\alg A$:
  by $x_i = m(x_{i+1}, c_i. a)$.

  We get a loop on $x_1$ by proving the following properties by induction
  \begin{enumerate}[(i)]
  \item $x_i\redgev c_1$ for all $n\geq i\geq 1$,
  \item $a \redgev x_i$ for all $n\geq i\geq 1$,
  \item $x_i\redgev x_{i+1}$  for all $n-1\geq i\geq 1$.
  \end{enumerate}

  When they are proved, there is a loop by
  $$
  x_1 = m(x_1,x_1,b) \redgev m(x_2,c_1,a) = x_1.
  $$

  The proof is straighforward. We check the first step of induction:
  \begin{enumerate}[(i)]
  \item $x_n = c_n\redgev c_1$,
  \item $a \redgev c_n = x_n$,
  \item $x_{n-1} = m(x_n,c_{n-1},a)\redgev m(c_1,c_n,c_n) = x_n$,
  \end{enumerate}
  and the induction step:
  \begin{enumerate}[(i)]
  \item $x_{i-1} = m(x_i,c_{i-1},a) \redgev m(c_1,c_i,c_1) = c_1$,
  \item $a = m(a,a,b) \redgev m(x_i,c_{i-1},a) = x_{i-1}$,
  \item
    \begin{align*}
      x_{i-1} = m(x_i,c_{i-1},a) &\redgev m(x_{i+1},c_i,x_{i+1}) = x_{i+1},\cr
      x_{i-1} = m(x_i,c_{i-1},a) &\redgev m(x_{i+1},c_i,c_i) = c_i,\cr
      x_{i-1} = m(x_{i-1},x_{i-1},b) &\redgev m(x_{i+1},c_i,a) = x_i.\cr
    \end{align*}
  \end{enumerate}
  This finishes the proof.
\end{proof}

However, we don't have an answer to the following question.
\begin{question}
  Does the existence of a 4-ary near unanimity term
  $$
  t(y,x,x,x)=t(x,y,x,x)=t(x,x,y,x)=t(x,x,x,y)=x
  $$
  imply any loop condition that is not equivalent to the $\relstr K_3$
  loop condition?
\end{question}

\bibliographystyle{plain}
\bibliography{bib-file.bib}

\begin{thebibliography}{10}

\bibitem{CongruenceLattices}
Keith A.~Kearnes and Emil Kiss.
\newblock {\em The shape of congruence lattices}, volume 1046.
\newblock Providence, Rhode Island : American Mathematical Society, 03 2013.

\bibitem{DagstuhlAbs}
Libor Barto and Marcin Kozik.
\newblock {Absorption in Universal Algebra and CSP}.
\newblock In Andrei Krokhin and Stanislav Zivny, editors, {\em The Constraint
  Satisfaction Problem: Complexity and Approximability}, volume~7 of {\em
  Dagstuhl Follow-Ups}, pages 45--77. Schloss Dagstuhl--Leibniz-Zentrum fuer
  Informatik, Dagstuhl, Germany, 2017.

\bibitem{BartoKozikLoop}
Libor Barto, Marcin Kozik, and Todd Niven.
\newblock The {CSP} dichotomy holds for digraphs with no sources and no sinks
  (a positive answer to a conjecture of {B}ang-{J}ensen and {H}ell).
\newblock {\em SIAM J. Comput.}, 38(5):1782--1802, 2008/09.

\bibitem{DagstuhlPoly}
Libor Barto, Andrei Krokhin, and Ross Willard.
\newblock {Polymorphisms, and How to Use Them}.
\newblock In Andrei Krokhin and Stanislav Zivny, editors, {\em The Constraint
  Satisfaction Problem: Complexity and Approximability}, volume~7 of {\em
  Dagstuhl Follow-Ups}, pages 1--44. Schloss Dagstuhl--Leibniz-Zentrum fuer
  Informatik, Dagstuhl, Germany, 2017.

\bibitem{PseudoLoop}
Libor Barto and Michael Pinsker.
\newblock The algebraic dichotomy conjecture for infinite domain constraint
  satisfaction problems.
\newblock In {\em Proceedings of the 31st Annual ACM/IEEE Symposium on Logic in
  Computer Science}, LICS '16, pages 615--622, New York, NY, USA, 2016. ACM.

\bibitem{Bergman}
Clifford Bergman.
\newblock {\em Universal algebra}, volume 301 of {\em Pure and Applied
  Mathematics (Boca Raton)}.
\newblock CRC Press, Boca Raton, FL, 2012.
\newblock Fundamentals and selected topics.

\bibitem{Bergman2}
Clifford Bergman, Roger~D. Maddux, and Don Pigozzi, editors.
\newblock {\em Algebraic Logic and Universal Algebra in Computer Science,
  Conference, Ames, Iowa, USA, June 1-4, 1988, Proceedings}, volume 425 of {\em
  Lecture Notes in Computer Science}. Springer, 1990.

\bibitem{BulatovLoop}
Andrei~A. Bulatov.
\newblock {$H$}-coloring dichotomy revisited.
\newblock {\em Theoret. Comput. Sci.}, 349(1):31--39, 2005.

\bibitem{HellNesetril}
Pavol Hell and Jaroslav Ne{\v{s}}et{\v{r}}il.
\newblock On the complexity of {$H$}-coloring.
\newblock {\em J. Combin. Theory Ser. B}, 48(1):92--110, 1990.

\bibitem{Kazda}
Alexandr Kazda.
\newblock Taylor term does not imply any nontrivial linear one-equality
  {Maltsev} condition.
\newblock Preprint arXiv:1409.4601, 2017.

\bibitem{OptimalStrong}
Keith Kearnes, Petar Markovi{\'c}, and Ralph McKenzie.
\newblock Optimal strong {M}al'cev conditions for omitting type 1 in locally
  finite varieties.
\newblock {\em Algebra Universalis}, 72(1):91--100, 2014.

\bibitem{LoopConditions}
Miroslav Ol\v{s}\'ak.
\newblock Loop conditions.
\newblock Preprint arXiv:1701.00260, 2017.

\bibitem{Siggers}
Mark~H. Siggers.
\newblock A strong {M}al'cev condition for locally finite varieties omitting
  the unary type.
\newblock {\em Algebra Universalis}, 64(1-2):15--20, 2010.

\end{thebibliography}

\end{document}